\newtheorem{theorem}{Theorem}[section]
\newtheorem{lemma}{Lemma}[section]
\newtheorem{corollary}{Corollary}[section]
\newtheorem{example}{Example}[section]
\newcommand{\K}{\mathcal{K}}
\theoremstyle{definition}
\def\mk{\mathcal{K}}
\def\p{\prime}
\numberwithin{equation}{section}
\begin{document}

\title[Nullification of Knots and Links]{Nullification of knots and links}

\author{Yuanan Diao}
\address{Department of Mathematics\\ University of North Carolina at Charlotte\\
Charlotte, NC
28223, USA} \email{ydiao@uncc.edu}

\author{Claus Ernst}
\address{Department of Mathematics and Computer Science,
Western Kentucky University, Bowling Green, KY 42101, USA}
\email{claus.ernst@wku.edu}

\author{Anthony Montemayor}
\address{Department of Mathematics and Computer Science,
Western Kentucky University, Bowling Green, KY 42101, USA}
\email{anthony.montemayor477@wku.edu}

\thanks{Y. Diao is
currently supported by NSF Grants \#DMS-0920880 and \#DMS-1016460, C. Ernst  was
partially supported  by an internal summer grant from WKU in 2010 and is currently supported NSF grant \#DMS-1016420.}
\subjclass{Primary 57M25.}

\keywords{knots, links, crossing number, unknotting number,
nullification, nullification number.}

\begin{abstract}
In this paper, we study a geometric/topological measure of knots
and links called the {\em nullification number}. The nullification of knots/links is
believed to be biologically relevant. For example, in DNA topology, one can intuitively regard it as a way to measure how easily a knotted circular DNA can unknot itself through recombination of its DNA strands. It turns out
that there are several different ways to define such a number.
These definitions lead to nullification
numbers that are related, but different. Our aim is to explore the mathematical
properties of these nullification numbers. First, we give specific examples to show that the nullification numbers we defined are different. We provide detailed analysis of the nullification numbers for the well known 2-bridge knots and links. We also explore the relationships among the three nullification numbers, as well as their relationships with other knot invariants. Finally, we study a special class of links, namely those links whose general nullification number equals one.  We show that such links exist in abundance. In fact, the number of such links with crossing number less than or equal to $n$ grows exponentially with respect to $n$.
\end{abstract}

\maketitle

\section{Introduction and basic concepts}

\medskip
Historically, knots and links had been a subject of study almost
only in the  realm of pure mathematics. That has dramatically
changed since the discovery of knots and links formed in circular
DNA a few decades ago. It turned out that the topology of the
circular DNA plays a very  important role in the properties of
the DNA. Various geometric and topological complexity measures of
DNA knots that are believed to be biologically relevant, such as
the knot types, the 3D writhe, the average crossing numbers, the
average radius of gyration, have been studied. In this paper, we
are interested in another geometric/topological measure of knots
and links called the {\em nullification number}, which is also
believed to be biologically relevant \cite{BuMa, GrBrVaSiSh}. Intuitively, this
number measures how easily a knotted circular DNA can unknot
itself through recombination of its DNA strands. It turns out
that there are several different ways to define such a number.
These different definitions lead to different nullification
numbers that are related. Our aim is to explore the mathematical
properties of the nullification numbers. In this section, we will
outline a brief introduction to basic knot theory concepts. In
Section 2, we will give precise definitions for three different
nullification numbers. In Section 3, we will study the
nullification numbers for a well known class of knots called the class of
Montesinos knots and links. In Section 4, we explore the relationships among the three nullification numbers, as well as their relationships with other knot  invariants. In particular, we give examples to show that the three nullification numbers defined here are indeed different. In Section 5, we study a special class of links, namely the links whose general nullification number equals one.  There we show that such links exist in abundance. In fact, the number of such links with crossing number less than or equal to $n$ grows exponentially with respect to $n$.

\medskip
Let $K$ be a tame link, that is, $K$ is a collection of several
piece-wise smooth simple closed curves in $\mathbb{R}^3$. In the
particular case that $K$ contains only one component, it is
called a knot instead. However through out this paper a link
always includes the special case that it may be a knot, unless
otherwise stated. A link is oriented if each component of the
link has an orientation. Intuitively, if one can continuously
deform a tame link $K_1$ to another tame link $K_2$ (in
$\mathbb{R}^3$), then $K_1$ and $K_2$ are considered  equivalent
links in the topological sense. The corresponding continuous
deformation is called an {\it ambient isotopy}, and $K_1$, $K_2$
are said to be ambient isotopic to each other. The set of all
(tame) links that are ambient isotopic to each other is called a
\textit{link type}. For a fixed link (type) $\mk$, a link diagram
of $\mk$ is a projection of a member $K\in\mk$ onto a plane. Such
a projection $p:\ K\subset R^3 \rightarrow D \subset R^2$ is
regular if the set of points $\lbrace x \in D :\vert
p^{-1}(x)\vert
>1\rbrace$ is finite and there is no $x$ in $D$ for which $\vert
p^{-1}(x)\vert
>2$. In other words, in the diagram no more than two arcs of $D$
cross at any point in the projection and there are only finitely
many points where the arcs cross each other. A point where two
arcs of $D$ cross each other is called a crossing point, or just
a crossing of $D$. The number of crossings in $D$ not only
depends on the link type $\mk$, it also depends on the
geometrical shape of the member $K$ representing $\mk$ and the
projection direction chosen. The minimum number of crossings in
all regular projections of all members of $\mk$ is called the
{\it crossing number} of the link type $\mk$ and is denoted by
$Cr(\mk)$. For any member $K$ of $\mk$, we also write
$Cr(K)=Cr(\mk)$. Of course, by this definition, if $K_1$ and
$K_2$ are of the same link type, then we have $Cr(K_1)=Cr(K_2)$.
However, it may be the case that for a member $K$ of $\mk$, none
of the regular projections of $K$ has crossing number $Cr(\mk)$.
A diagram $D$ of a link $K\in\mk$ is {\it minimum} if the number
of crossings in the diagram equals $Cr(\mk)$. We will often call
$D$ a {\it minimum projection diagram}. A link diagram is {\it
alternating} if one encounters over-passes and under-passes
alternatingly when traveling along the link projection. A diagram
$D$ is said to be {\em reducible} if there exists a crossing
point in $D$ such that removing this crossing point makes the
remaining diagram two disconnected parts. $D$ is {\em reduced} if
it is not reducible. A link is {\em alternating} if it has a
reduced alternating diagram. A famous result derived from the
Jones polynomial is that the crossing number of an alternating
link $\mk$ equals the number of crossings in any of its reduced
alternating diagram since each diagram is minimum. For example
the diagram of the knot $\mk_1$ in Figure \ref{composite} is
minimum.

\medskip
A link $\mk$ is called a {\it composite link} if a member of it
can be obtained by cutting open two nontrivial links $K_1$ and
$K_2$ and reconnecting the strings as shown in Figure
\ref{composite}. The resulting link is written as $K=K_1\#K_2$
and $K_1$, $K_2$ are called the {\it connected sum components} of
$K$. Of course a link can have  more than two connected sum
components. A link $\mk$ that is not a composite link is called a
{\it prime link}.

\begin{figure}[h!]
\begin{center}
\includegraphics[scale=0.6]{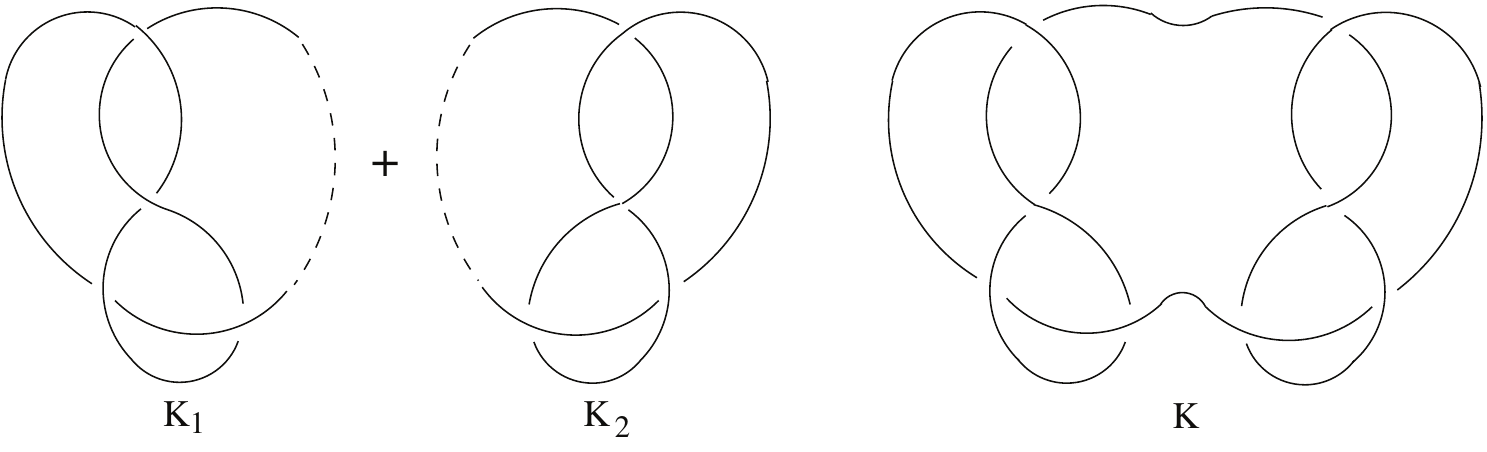}
\caption{\label{composite} A composite knot $K=K_1\#K_2$.}
\end{center}
\end{figure}

In the case of alternating links, any two
minimum projection diagrams $D$ and $D'$ of the same alternating
link $\mk$ are flype equivalent, that is, $D$ can be changed to
$D'$ through a finite sequence of flypes \cite{MT1, MT2} (see
Figure \ref{flype}).

\begin{figure}[h!]
\begin{center}
\includegraphics[scale=1.0]{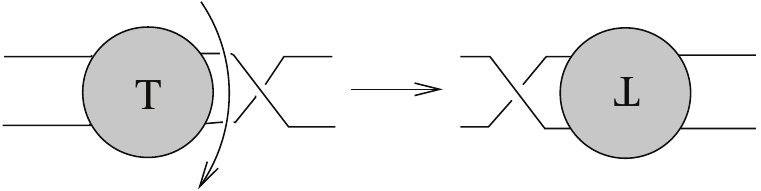}
\caption{\label{flype} A single flype. $T$ denotes a part of the
diagram that is rotated by 180 degrees by the flype.}
\end{center}
\end{figure}

Let $c$ be a crossing in an alternating diagram $D$. The {\em flyping
circuit of $c$} is defined as the unique decomposition of $D$ into
crossings $c_1, c_2, \ldots, c_m$, $m\ge 1$ and tangle diagrams
$T_1, T_2, \ldots, T_r$, $r\ge 0$ joined together as shown in Figure
\ref{flypingcircuit} such that (i) $c=c_i$ for some $i$ and (ii) the
$T_i$ are minimum with respect to the pattern.

\begin{figure}[h!]
\begin{center}
\includegraphics[scale=.6]{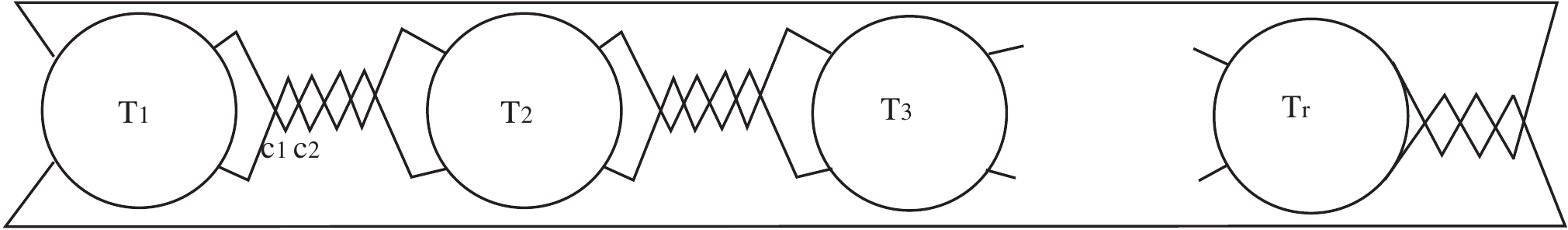}
\caption{\label{flypingcircuit} A flyping circuit. Any crossing in the
flyping circuit can be flyped to any position between tangles $T_i$
and $T_{i+1}$.}
\end{center}
\end{figure}

\medskip
\section{Definitions of Nullification Numbers}

\medskip
Let $D$ be a regular diagram of an oriented link $\mk$. A
crossing in $D$  is said to be {\em smoothed} if the strands of
$D$ at the crossing are cut and re-connected as shown in Figure
\ref{smooth}. If every crossing in $D$ is smoothed, the result
will be a collection of disjoint (topological) circles without
self intersections. These are called the {\em Seifert circles} of
$D$. Of course the set of Seifert circles of $D$ represents a
trivial link diagram. However, it is not necessary to smooth
every crossing of $D$ to make it a trivial link diagram. For
example, if a diagram has only one crossing, or only two crossing
with only one component, then the diagram is already a trivial
link diagram. So the minimum number of crossings needed to be
smoothed in order to turn $D$ into a trivial link diagram is
strictly less than the number of crossings in $D$. This minimum
number is called the {\em nullification number} of the diagram
$D$, which we will write as $n_D$. Notice that $n_D$ is not a
link invariant since different diagrams (of the same link) may
have different nullification numbers. In order to define a number
that is a link invariant, we would have to consider the set of
all diagrams of a link. Depending on how we choose to smooth the
crossings in the process, we may then get different versions of
nullification numbers. This approach is in a way similar to the
how different versions of unknotting numbers are defined in
\cite{DES}.

\begin{figure}[h!]
\begin{center}
\includegraphics[scale=.4]{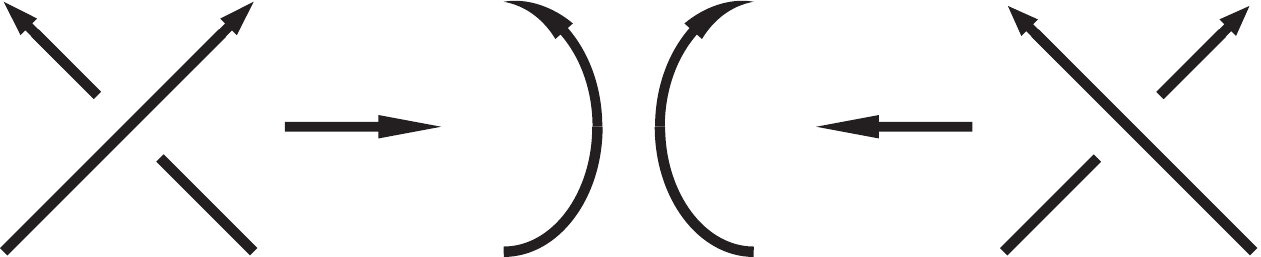}
\caption{\label{smooth} The smoothing of a single crossing.}
\end{center}
\end{figure}

\medskip
Let $\mk$ be an oriented link and $D$ be a (regular) diagram of
$\mk$.  Choose some crossings in $D$ and smooth them. This
results in a new diagram $D^\prime$ which is most likely of a
different link type other than $\mk$. Suppose we are allowed to
deform $D^\prime$ (without changing its link type, of course) to
a new diagram $D_1$. We can then again choose some crossings in
$D_1$ to smooth and repeat this process. With proper choices of
the new diagrams and the crossings to be smoothed, it is easy to
see that this process can always terminate into a trivial link
diagram. The minimum number of crossings required to be smoothed
in order to make any diagram $D$ of $\mk$ into a trivial link
diagram by the above procedure is then defined as the {\em
general nullification number} of $\mk$, or just the nullification
number of $\mk$. We will denote it by $n(\mk)$.

\medskip
On the other hand, if in the above nullification procedure, we
require that the diagrams used at each step be minimum diagrams
(of their corresponding link types), then the minimum number of
crossings required to be smoothed in order to make any minimum
diagram $D$ of $\mk$ into a trivial link diagram is defined as
the {\em restricted nullification number} of $\mk$, which we will
denote by $n_r(\K)$.

\medskip
In the case that $D$ is a minimum diagram of $\mk$, we have
already defined the nullification  number $n_D$ for the diagram
$D$, namely the minimum number of smoothing moves needed to
change $D$ into a trivial link diagram. If we take the minimum of
$n_D$ over all minimum diagrams of $\mk$, then we obtain a third
nullification number of $\mk$, which we will call the {\em
diagram nullification number} of $\mk$ and will denoted it by
$n_d(\mk)$.

\medskip
By the above definitions, clearly we have
$$
n(K)\le n_r(\mk) \le n_d(\mk).
$$

We shall see later that these definitions of nullification
numbers are indeed all different. Of the three nullification
numbers, the diagram nullification number $n_d(\mk)$ has been
studied in \cite{C, S}.   Specifically, in \cite{S} it is shown
that for an alternating link $\mk$, the diagram nullification
number $n_d(\mk)$ can be computed from any reduced alternating
diagram $D$ of $\mk$ using the following formula:
\begin{equation}
\label{solaeq}
n_d(\mk)=Cr(D)-s(D) +1,
\end{equation}
where $s(D)$ is the number of Seifert circles in $D$. This allows
us to express the genus $g(\mk)$ of an alternating link $\mk$ in
terms of the number of Seifert circles and the nullification
number by
\begin{equation}
\label{genuseq}
g(\mk) =\frac{1}{2} (n_d(\mk)-\nu+1),
\end{equation}
where $\nu$ is the number of components of $\mk$. On the other
hand, the diagram nullification number $n_d$ for alternating
links is closely related to the HOMFLY polynomial by  the
following lemma. This provides an expression of $n_d$ without
having to make reference to a particular diagram.

\medskip
\begin{lemma}\label{nDrelationtopoly}
Let $\mk$ be an alternating non-split link, then $
n_D(\mk)=\beta_z, $ where $\beta_z$ is the maximum degree of the
variable $z$ in the HOMFLY polynomial $P_\mk(v,z)$ of $\mk$.
\end{lemma}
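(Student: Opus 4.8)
The plan is to reduce the claim to a sharp estimate on the top $z$-degree of the HOMFLY polynomial that can be read off from a reduced alternating diagram, and then to prove matching upper and lower bounds. Since $\mk$ is alternating and non-split, fix a reduced (hence connected) alternating diagram $D$ of $\mk$ with $Cr(D)$ crossings and $s(D)$ Seifert circles. By (\ref{solaeq}) the quantity $Cr(D)-s(D)+1$ is exactly the diagram nullification number, so it suffices to prove the two inequalities $\beta_z\le Cr(D)-s(D)+1$ and $\beta_z\ge Cr(D)-s(D)+1$. Throughout I would use the HOMFLY skein relation in the form $v^{-1}P_{L_+}-vP_{L_-}=zP_{L_0}$, normalized by $P_{\text{unknot}}=1$.

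First I would establish the upper bound $\beta_z\le Cr(D)-s(D)+1$ for every connected diagram by induction on the crossing number, following Morton's argument. The crucial bookkeeping is that the oriented smoothing $L_0$ appearing in the skein relation is a single step of Seifert's algorithm, so $L_0$ has the same Seifert circles as its parent while carrying one fewer crossing, whereas a crossing change alters neither the crossing number nor the Seifert circle count. Writing $\beta_z(\cdot)$ for the maximal $z$-degree of a diagram's HOMFLY polynomial, the rearranged relation $P_{L_+}=v^2P_{L_-}+vzP_{L_0}$ and its inverse give $\beta_z(L_\pm)\le\max\{\beta_z(L_\mp),\,1+\beta_z(L_0)\}$, and the inductive hypothesis bounds $1+\beta_z(L_0)\le 1+\big((Cr-1)-s+1\big)=Cr-s+1$. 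Changing crossings one at a time converts $D$ into a descending diagram of a $\nu$-component unlink, whose polynomial is $\big((v^{-1}-v)/z\big)^{\nu-1}$ and hence has $\beta_z=-(\nu-1)\le 0\le Cr-s+1$ (the last step because a connected Seifert graph has $Cr\ge s-1$). Propagating the skein estimate back along this chain of crossing changes, all of which preserve $Cr$ and $s$, yields the desired bound.

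The hard part will be the matching lower bound: showing that the coefficient of $z^{Cr(D)-s(D)+1}$ in $P_\mk$ does not vanish. The upper bound above holds for arbitrary diagrams and is in general strict, so the entire content of the lemma lies in this non-cancellation, and it is precisely here that alternation is indispensable. My approach would be to exploit that a reduced alternating diagram is homogeneous, meaning that within each block of its Seifert graph all crossings carry the same sign; for homogeneous diagrams the extremal $z$-term is, up to a unit, a product of the leading Conway-polynomial coefficients of the homogeneous blocks, each of which is nonzero. This is Cromwell's theorem on homogeneous links. A more self-contained alternative would be to follow the top-degree term directly through the skein resolution tree and show that the unique state realizing $z^{Cr(D)-s(D)+1}$ is the all-Seifert-smoothing state, whose contributions cannot cancel exactly because of the sign-coherence forced by alternation. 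Making this non-cancellation rigorous is the main obstacle; once it is in hand, combining it with the upper bound and (\ref{solaeq}) proves $\beta_z=Cr(D)-s(D)+1$, and (\ref{genuseq}) then re-expresses the identity in terms of the genus.
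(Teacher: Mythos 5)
Your proposal is correct and, via your primary route, is essentially the paper's own argument: both pin $\beta_z$ between Morton's inequality $\beta_z\le Cr(D)-s(D)+1$ and a matching lower bound coming from the known sharpness of the Conway polynomial degree $\alpha_z$ on a reduced alternating diagram (using $C_\mk(z)=P_\mk(1,z)$, so $\alpha_z\le\beta_z$), and then invoke (\ref{solaeq}). The only difference is the classical input cited for that lower bound --- you use Cromwell's homogeneous-links theorem, while the paper uses $\alpha_z=2g(\mk)+\mu-1$ for alternating links together with $g(\mk)=\frac{1}{2}(Cr(D)-s(D)-\mu)+1$ from Seifert's algorithm on a reduced alternating diagram --- and with either citation in hand the non-cancellation you describe as ``the main obstacle'' is already settled, so your self-contained skein-tree alternative is not needed.
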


\medskip The result of Lemma \ref{nDrelationtopoly} is given in \cite{S}.
In the following we give a short proof of the lemma, since a
proof was not given in \cite{S}.  For more and detailed
information regarding HOMFLY polynomial and other facts in knot
theory, please refer to a standard text in knot theory such as
\cite{Crom}.

\medskip
\begin{proof}
For any link $\mk$ with diagram $D$ we have the inequality $
\beta_z\le Cr(D)-s(D)+1. $ The Conway polynomial $C_\mk(z)$ of
$\mk$ is related to $P_\mk(v,z)$ by  the equation
$C_\mk(z)=P_\mk(1,z)$. It follows that the maximum degree
$\alpha_z$ of $z$ in $C_\mk(z)$ is at most $\beta_z$. Since $\mk$
is alternating, $\alpha_z=2g(\mk)+\mu-1$, where $g(\mk)$ is the
genus of $\mk$ and $\mu$ is the number of components of $\mk$.
Using the Seifert algorithm on a reduced alternating diagram of
$\mk$ we get
$$
g(\mk)=\frac{1}{2}(Cr(D)-s(D)-\mu)+1.
$$
Now it follows that
\begin{eqnarray*}
\alpha_z&=& 2g(\mk)+\mu-1\\
&=& (Cr(D)-s(D)-\mu)+\mu+1\\
&=&Cr(D)-s(D)+1\\
&\ge & \beta_z.
\end{eqnarray*}
Thus $\beta_z=\alpha_z=Cr(D)-s(D)+1=n_D(\mk)$.
\end{proof}

\medskip
In general, if $D$ is a diagram of some non-alternating link,
then we have $n_D\le Cr(D)-s(D)+1$, but the precise determination
of $n_D$ (hence $n_d(\mk)$) is far more difficult. In the
following we propose a different inequality concerning $n_D$
using the concept of parallel and anti-parallel crossings. A
flyping circuit is said to be {\em nontrivial} if it either
contains more than one crossing or more than one tangle.
Otherwise it is called a {\em trivial} flyping circuit.  If a
crossing is part of a nontrivial flyping circuit then the
crossing belongs to a unique flyping circuit \cite{calvo}. If
there is more than one crossing in a flyping circuit of an
oriented link diagram, then the crossings in the flyping circuit
are called {\em parallel} or {\em anti-parallel} as shown in
Figure \ref{parandntipar}.

\begin{figure}[h!]
\begin{center}
\includegraphics[scale=.4]{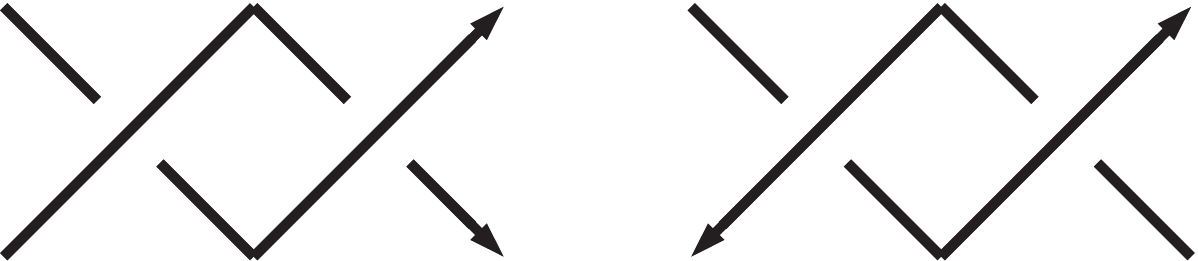}
\caption{\label{parandntipar} Parallel (left) and anti-parallel
(right)  crossings in a nontrivial flyping circuit.}
\end{center}
\end{figure}

Note that we can assign the notion of parallel or anti-parallel
even to a  single crossing as long as the flyping circuit has at
least two tangles. For trivial flyping circuits that consist of a
single crossing and a single tangle there is no obvious way of
assigning a notion of parallel or anti-parallel to it. A
nontrivial flyping circuit has the special property that all the
crossings in it can be eliminated by nullifying (i.e. smoothing)
a single crossing if the crossings are anti-parallel, while all
crossings in it have to be smoothed (in order to eliminate them
with nullifying moves within the circuit) when the crossings are
parallel. Let $P_1$, $P_2$, ..., $P_m$ be the nontrivial flyping
circuits with parallel crossings and let $|P_i|$ be the number of
crossings in $P_i$. Let $A$ be number of nontrivial flyping
circuits with anti-parallel crossings and $S$ be the total number
of crossings in all trivial flyping circuits. We conjecture that
for any link diagram $D$
\begin{equation}
\label{ineqfornD}
n_D\le \sum_{1\le i\le m}(|P_i|-1)+A+S+c,
\end{equation}
where $c\le 1$ is an additional constant depending on the link
type of $D$.   For alternating diagrams, we expect an almost
equality in (\ref{ineqfornD}), while for non-alternating diagrams
(\ref{ineqfornD}) may still be a large overestimate. Figure
\ref{mathknot11a263} shows the case of a minimum diagram for the
knot $11a_{263}$. There are four visible nontrivial flyping
circuits (three of which have 3 crossings and one with two
crossings) and all crossings in the circuits are parallel and
each crossing belong to one such circuit. It follows that $A=S=0$
and $\sum_{1\le i\le 4}(|P_i|-1)+A+S=7$. Thus (\ref{ineqfornD})
becomes an equality with the choice of $c=1$ since $n_D=8$. This
example shows that it is necessary for us to have the constant
$c$ term in general.

\begin{figure}[h!]
\begin{center}
\includegraphics[scale=.4]{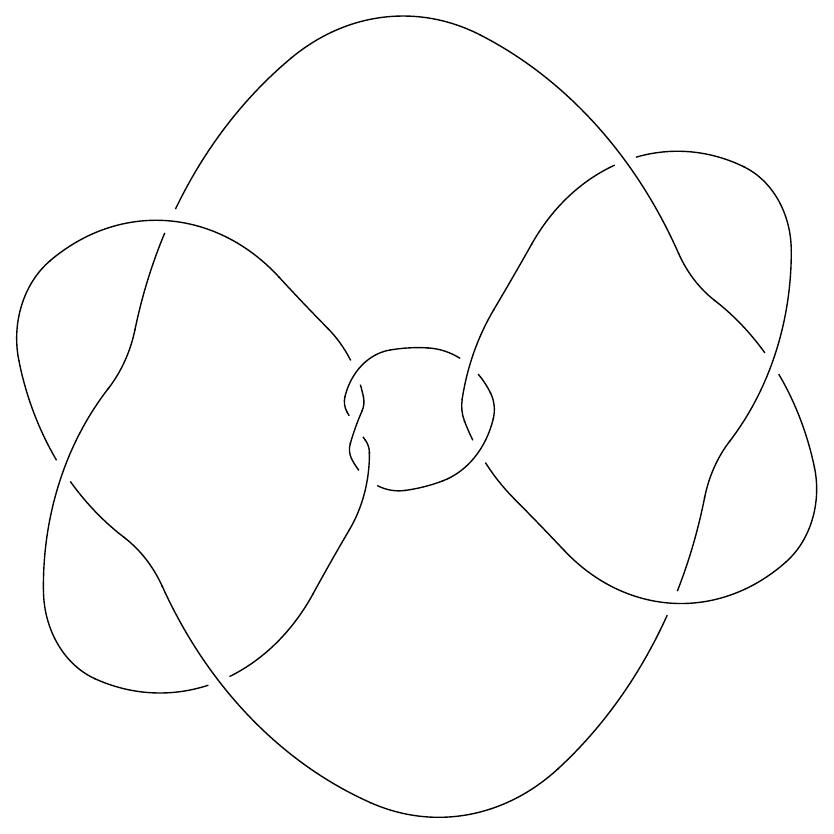}
\caption{\label{mathknot11a263} A minimum diagram knot
$11a_{263}$ with $n_D=8$.  It has four parallel nontrivial
flyping circuits and each crossing belongs to one of these
circuits.}
\end{center}
\end{figure}

\medskip
\section{Diagram Nullification Numbers of 4-plats and Montesinos Links}

\medskip
In this section we discuss the nullification number $n_D$ of
4-plats and Montesinos links.  The goal is to show that the
inequality in (\ref{ineqfornD}) holds for these links.

\medskip
A 4-plat is a link with up to two components that admits a
minimum alternating diagram  as shown in Figure \ref{4plat} where
a grey box marked by $c_i$ indicates a row of $c_i$ horizontal half-twists. Such a link is
completely defined by such a vector $(c_1,c_2,\ldots,c_k)$ of positive integer entries.
Obviously, two vectors of the form $(c_1,c_2,\ldots,c_k)$ and
$(c_k,c_{k-1},\ldots,c_1)$ define the same link. However, it is much less obvious that two such vectors define
different 4-plats if they are not reversal of each other. For a detailed discussion on the
classification of 4-plats see \cite{BZ, Crom}. In a standard
4-plat diagram there is an obvious way to assign the notion of
parallel or anti-parallel to a single crossing, based on if both
strings move in the same right-left direction.

\begin{figure}[h!]
\begin{center}
\includegraphics[scale=0.9]{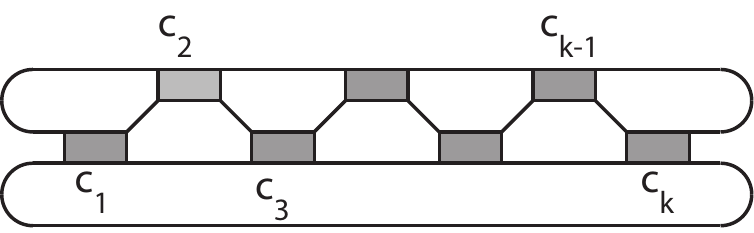}
\caption{\label{4plat} A typical 4-plat template. A gray box with label $c_i$
represents a horizontal sequence of $c_i$ crossings.}
\end{center}
\end{figure}

\medskip
A similar schema based on a vector $T = (a_1,a_2,\ldots,a_n)$ is
used to classify rational tangles. A tangle $T$ is part of a
link diagram that consists of a disk that contains two properly
embedded arcs. For a typical rational tangle diagram see Figure
\ref{tanglediagram} where the rectangular box contains either
horizontal or vertical half-twists. A horizontal (vertical)
rectangle labeled $a_i$ contains $|a_i|$ horizontal (vertical)
half-twists and horizontal and vertical rectangles occur in an
alternating fashion. All rational tangles end with $a_n$
horizontal twists on the right. The $a_i$'s are either all
positive or all negative, with the only exception that $a_n$ may
equal to zero. For a classification and precise definition of
such tangles see \cite{BZ, Crom}. We assign the notion of
parallel or anti-parallel to a single crossing, based on if both
strings move in the same right-left direction for a horizontal
crossing and based on if both strings move in the same up-down
direction for a vertical crossing.

\begin{figure}[h!]
\begin{center}
\includegraphics[scale=.6]{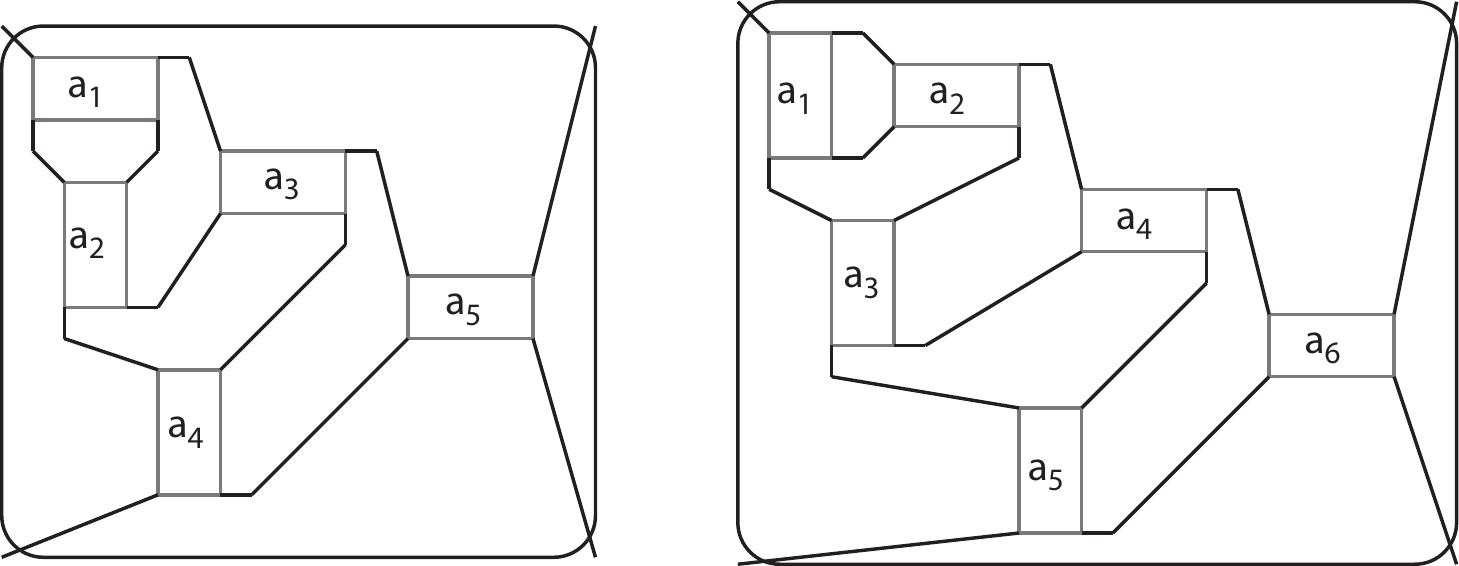}
\caption{A rational tangle diagram $T$ given by the vector
$T=(a_1,a_2,\ldots,a_n)$. On the left is a  diagram with $n$ odd
($n=5$) and on the left is a diagram with $n$ even ($n=6$). A
small rectangle with label $a_i$ contains $|a_i|$ half-twists,
where $a_i$'s are either all positive or all negative, with the
only exception that $a_n$ may equal to zero. }
\label{tanglediagram}
\end{center}
\end{figure}

\medskip
Each rational tangle $T=(a_1,a_2,\ldots,a_n)$ defines a rational
number $\beta/\alpha$ using the continued fraction expansion:
$$
\frac{\beta}{\alpha}=a_n+\frac{1}{a_{n-1}+\frac{1}{a_{n-1}+\cdots +\frac{1}{a_1}}}.
$$

\medskip
Rational tangles are the basic building blocks of a large family
of links called Montesinos links. A Montesinos link admits  a
diagram that consists of rational tangles $T_i$ strung together
as shown in Figure \ref{montesionsLink} together with a
horizontal number of $|e|$ half-twists (as indicated by the
rectangle in the figure). Such a diagram is called a {\em
Montesinos diagram}. We say that a Montesinos diagram is of type
I if the orientations between the two arcs connecting any two
adjacent tangles in the diagram are parallel. In this case the
orientations between the two arcs connecting any two other
adjacent tangles in the diagram must be parallel as well. We say
that diagram is of type II otherwise.

\begin{figure}[h!]
\begin{center}
\includegraphics[scale=.4]{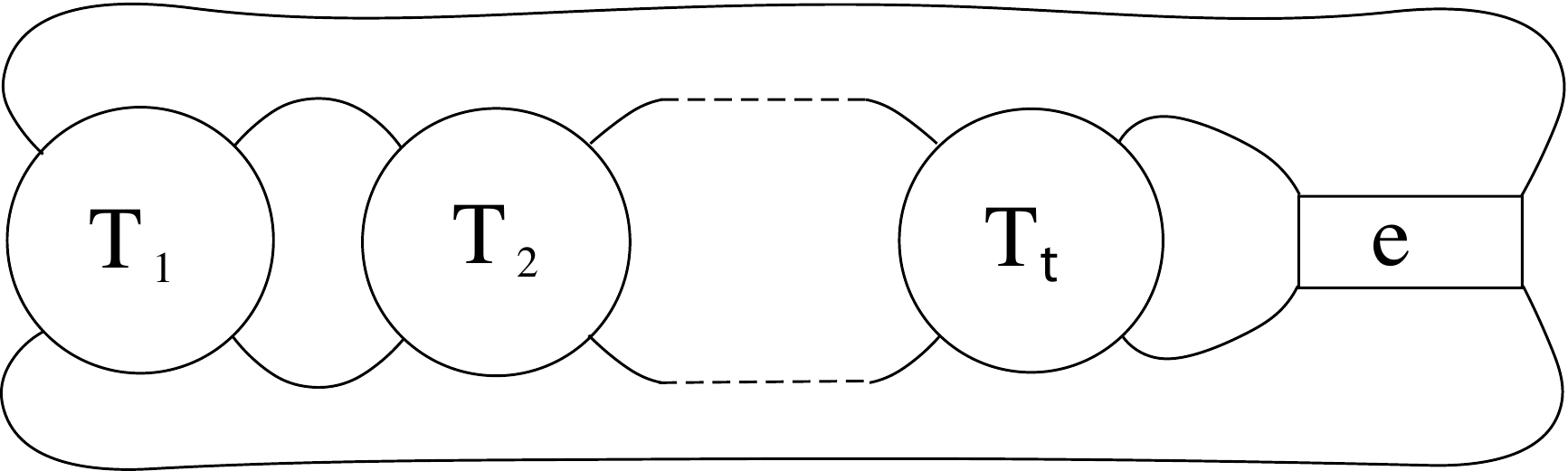}
\caption{An illustration of a Montesinos link diagram $D=K(T_1,
T_2,\ldots, T_t, e)$ where each $T_i$ is a rational tangle. }
\label{montesionsLink}
\end{center}
\end{figure}

\medskip
Let $\frac{\beta_i}{\alpha_i}$ be the rational number whose
continued fraction expansion  is the vector that defines the
rational tangle $T_i$. We will sometimes write $K(T_1,
T_2,\ldots, T_t, e)$ as $K(\frac{\beta_1}{\alpha_1},
\frac{\beta_2}{\alpha_2},\ldots, \frac{\beta_t}{\alpha_t}, e)$.
It is known that a Montesinos link admits a Montesinos diagram
satisfying the following additional condition:
$|\frac{\beta_i}{\alpha_i}|<1$ for each $i$ (hence the continued
fraction of $\frac{\beta_i}{\alpha_i}$ is of the form $(a_{i,1},
a_{i,2}, \cdots,a_{i,n_i},0)$, where $a_{i,j}>0$). See \cite{BZ}
for an explanation of this and the classification of Montesinos
links in general. For a Montesinos link $\mk$, let $D_{\K}$ be a
Montesinos diagram of $\K$ that satisfies this condition and let
$P_i$ be the set of indices $i$ such that $a_{i,j}$ consists of
parallel crossings and $A_i$ be the set of indices $i$ such that
$a_{i,j}$ consists of anti-parallel crossings. We have the
following theorem.

\medskip
\begin{theorem}\label{T2}
Let $\K$ be a Montesinos link with Montesinos diagram
$D_\mk=K(\frac{\beta_1}{\alpha_1},
\frac{\beta_2}{\alpha_2},\ldots, \frac{\beta_t}{\alpha_t}, e)$,
where $|\frac{\beta_i}{\alpha_i}|<1$. Then the number of Seifert
circles in $D_\mk$ is given by the following formula
$$
s(D_{\K})=\left\{
\begin{array}{ll}
\sum_{i=1}^t( \sum_{j\in A_i} (|a_{i,j}|-1)+|P_i|)+2 & {\rm if}\ D_\mk\,{\rm is\ of\ type\ I},\\
\sum_{i=1}^t( \sum_{j\in A_i} (|a_{i,j}|-1)+|P_i|)+|e| + c& {\rm if}\ D_\mk\,{\rm is\ of\ type\ II},
\end{array}
\right.
$$
where $c=2$ if $e=0$ and all tangles end with anti-parallel
vertical twists, and $c=0$ otherwise.
\end{theorem}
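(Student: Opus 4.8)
The plan is to evaluate the oriented (Seifert) smoothing one twist region at a time and then assemble the count over the whole Montesinos diagram, keeping track of how the smoothed arcs connect across adjacent regions and tangles. First I would prove a local lemma describing what the Seifert smoothing does to a single twist region of $k$ crossings. Each crossing has exactly two non-crossing resolutions and the orientation selects one of them: a \emph{parallel} twist region is smoothed into a pair of arcs that carries the two strands straight through the region (a ``pass-through'') and creates no closed curve inside it, whereas an \emph{anti-parallel} twist region is smoothed into a pair of arcs that caps off the two endpoints on each side (a ``turn-back'') together with exactly $k-1$ small Seifert circles trapped between consecutive crossings. For a horizontal region the parallel smoothing joins the two top and the two bottom endpoints while the anti-parallel smoothing joins the two left and the two right endpoints; for a vertical region the two patterns are interchanged. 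This lemma immediately explains the summand $\sum_{j\in A_i}(|a_{i,j}|-1)$ as the total number of Seifert circles created \emph{inside} the anti-parallel twist regions.

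Next I would account for the remaining Seifert circles, namely those formed by the ``skeleton'' obtained after replacing every twist region by its pass-through or turn-back connection. Building each rational tangle $T_i=(a_{i,1},\dots,a_{i,n_i},0)$ one twist region at a time along the alternating horizontal/vertical sequence of its continued fraction, I would track the connection state of the running tangle (one of the two non-crossing pairings of its four boundary points) together with the running circle count, updating both by the cases of the local lemma. Composing a turn-back against a compatible pairing closes off a loop, and the bookkeeping of these closures is exactly what produces the contribution $|P_i|$, one unit per parallel region of $T_i$. I expect this to reduce to a short induction on $n_i$ with a clean invariant for the boundary pairing of the partially built tangle.

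Finally I would join the $t$ tangles to the $|e|$ horizontal twists and the outer closure, and here the orientation type dictates the behaviour of the inter-tangle connectors and of the $e$-twist region. In Type I the two arcs joining any pair of adjacent tangles are parallel, so the connectors and the $|e|$-twist region are parallel and pass straight through; the whole diagram then closes into exactly two long Seifert circles running along the top and bottom, which is the additive constant $2$ (and explains why no $|e|$ term appears). In Type II the connecting arcs are anti-parallel, forcing the $|e|$-twist region to be anti-parallel as well; it contributes $|e|-1$ internal circles, and the remaining closure arcs add one more, giving the linear term $|e|$, with $c$ recording whether the top and bottom arcs close into an extra pair of circles.

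I expect the last step to be the main obstacle: pinning down the additive constant exactly, and in particular isolating the degenerate Type II case $e=0$ in which all tangles end with anti-parallel vertical twists, where the absence of an $e$-region together with the capping turn-backs at the bottom of each tangle leaves two global arcs that close into two extra Seifert circles and force $c=2$. Handling this requires a uniform analysis of how the boundary pairings of the individual tangles interact with the closure, carried out independently of the internal arrangement of parallel and anti-parallel regions, so that the per-region counts from the previous step combine cleanly with the global closure count rather than double-counting loops at the tangle boundaries.
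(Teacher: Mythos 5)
Your plan follows the same skeleton as the paper's proof: a local lemma resolving each twist region (anti-parallel regions contribute $|a_{i,j}|-1$ trapped circles plus turn-backs, parallel regions pass through), then a count of circles closed inside each tangle, then a global closure count --- these correspond exactly to the paper's ``small'', ``medium'' and ``large'' Seifert circles. The local lemma and the small-circle count are correct. The genuine gap is in your second and third steps, and it is precisely the interaction the theorem's case analysis and constant $c$ exist to capture. Your claim that the in-tangle bookkeeping yields $|P_i|$ circles, ``one unit per parallel region of $T_i$,'' fails whenever the last nonzero region $a_{i,n_i}$ (a vertical region) is parallel: a parallel region is a pass-through, so it closes a loop only when turn-backs cap it on both sides, and for $j=n_i$ the outer side is the tangle boundary, so its two strands exit the tangle and lie on circles that also traverse the closure arcs. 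The correct in-tangle count is $|P_i|-p_i$, where $p_i=1$ exactly when $T_i$ ends in parallel vertical twists. Dually, the global count is not the uniform ``$2$ in type I, $|e|+c$ in type II'' you assert: in type II it equals $p$ (when $e=0$, $p\neq 0$) or $p+1$ (when $e\neq 0$, $p\neq 0$), where $p=\sum_i p_i$, so it cannot be computed ``independently of the internal arrangement of parallel and anti-parallel regions.'' A minimal counterexample to both intermediate claims: orient $K(\tfrac12,\tfrac12,0)$ (the pretzel link $P(2,2)$, each tangle being the vector $(2,0)$) so that both vertical twist regions are parallel. Then the two arcs connecting the tangles are anti-parallel, so the diagram is type II with $e=0$, $p=2$, $c=0$; each tangle contains \emph{no} internal Seifert circle at all (your claim: one each), and both Seifert circles of the diagram run through the closure arcs (your claim: zero global circles, which is impossible in any case, since every closure arc must lie on some Seifert circle).

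Note that your two errors cancel in the final sum --- $\bigl(\sum_i |P_i|\bigr)-p$ internal circles plus $p$ (resp.\ $p+1$) global circles totals the same as your claimed $\sum_i |P_i|$ plus $0$ (resp.\ plus $|e|$) --- which is why the formula you assemble agrees with the theorem. But a faithful execution of your induction on the tangle would return $|P_i|-p_i$, and feeding that into your closure analysis would leave the type II formula short by exactly $p$. The paper's proof resolves this by recording the deficiency $-p$ in the medium-circle count and then showing the closure produces $p$, $p+1$, or (when $p=0$ and $e=0$) $2$ large circles, the last case being what forces $c=2$. You flag this boundary-pairing interaction as ``the main obstacle,'' but the proposal prejudges its outcome incorrectly, so as written it does not establish the formula.
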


\begin{proof}
Consider one of the tangle diagrams
$\frac{\beta_i}{\alpha_i}=(a_{i,1}, a_{i,2},
\cdots,a_{i,n_i},0)$. If the crossings corresponding to $a_{i,j}$
have parallel orientation,  then the crossings correspond to
$a_{i,j-1}$ and $a_{i,j+1}$ must have anti-parallel orientation.
This can be seen as follows: Assume that $a_{i,j}$ represents
$|a_{i,j}|$ vertical twists with a parallel orientation, see
Figure \ref{tangleconfiguration}. Assume further that both
strings are oriented upwards. Then we have two strands entering
the tangle marked by the dashed oval in Figure
\ref{tangleconfiguration} from below. Therefore the other two
strands of that tangle must have an exiting orientation. This
implies that the half-twists at $a_{i,j-1}$ and $a_{i,j+1}$ must
be anti-parallel. A similar argument holds if $a_{i,j}$
represents horizontal twists with a parallel orientation.

\begin{figure}[h!]
\begin{center}
\includegraphics[scale=.6]{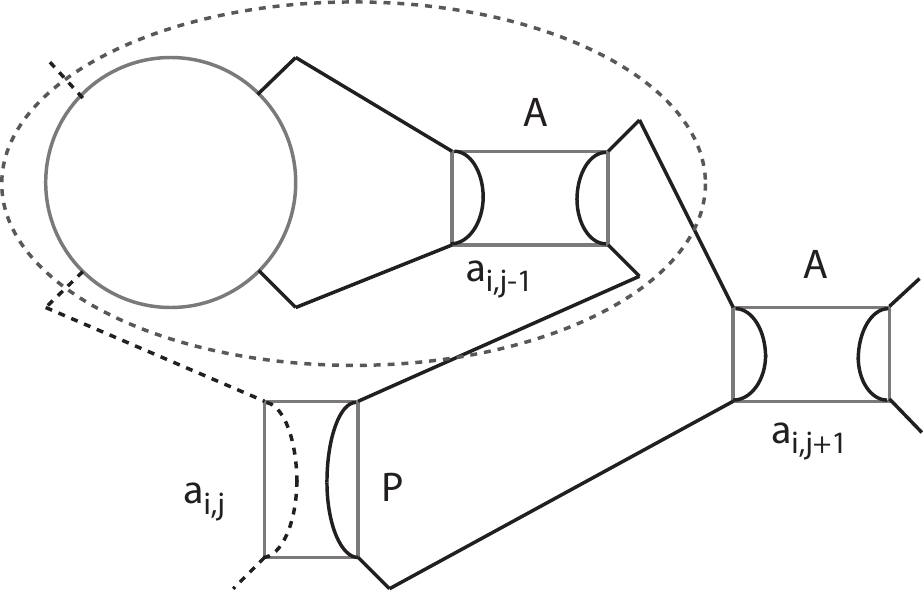}
\caption{ The local structure of parallel and anti-parallel
orientations and  the Seifert circle structure resulting from
nullification under the assumption that $a_{i,j}$ is vertical and
has parallel orientation .} \label{tangleconfiguration}
\end{center}
\end{figure}

\medskip
Moreover, there is a Seifert circle that uses the boxes of all
three entries $a_{i,j-1}$,  $a_{i,j}$ and $a_{i,j+1}$ as shown in
the Figure \ref{tangleconfiguration}. This implies that the
dashed arcs at the top left and bottom left of the tangle marked
by the dashed oval in Figure  \ref{tangleconfiguration} must
belong to the same Seifert circle. It is easy to see that these
properties are the same if $a_{i,j}$ consists of horizontal
twists.

\medskip
Note that after nullification the two arcs in the tangle $T_i$
are changed to a  set of disjoint Seifert circles and two
disjoint arcs connecting two of the four endpoints of the tangle.
The Seifert circles generated be nullification can be grouped
into three different categories. The first group consists of the
{\em small Seifert circles}, namely those generated by two
consecutive half-twists that are anti-parallel. Clearly, if
$|a_{i,j}|>1$ and the corresponding crossings are anti-parallel,
then there are $|a_{i,j}|-1$ such small circles. The second group
consists of the {\em medium sized Seifert circles}, namely the
Seifert circles that are not small but are contained within one
of tangles $T_i$. The two Seifert circles shown in Figure
\ref{tangleconfiguration} are medium sized ones since they are
contained in a tangle and involve parallel crossings. The third
group consists of the rest of the Seifert circles. These are
Seifert circles that involve more than one tangle and are called
{\em large Seifert circles}. Figure  \ref{tangleconfiguration}
also shows that for each $a_{i,j}$ that is parallel, one of the
two arcs after nullification belongs to a medium Seifert circle
and the other belongs to a large Seifert circle. The only
exception occurs when $a_{i,j}$ consists of parallel half twists
and is the last nonzero entry of the tangle, that is $j=n_i$. In
this case both arcs belong to large Seifert circles.

\medskip
We have thus shown the following:

\smallskip\noindent
(i) There are $\sum_{i=1}^t \sum_{j\in A_j} (|a_{i,j}|-1)$ small Seifert circles.

\smallskip\noindent
(ii) There are $-p+\sum_{i=1}^t |P_i|$ medium Seifert circles
where $p$ is the number of  tangles $T_i$ where $a_{i,n_i}$
consists of parallel half twists.

\medskip
If $D_\K$ is of type I, then each $a_{i,n_i}$ consists of
anti-parallel twists and $p=0$.  Furthermore, $e$ consists of
parallel twists as well and it is easy to see that there are
exactly two large Seifert circles. (One passes through all the
tangles at the bottom and the other weaves through all tangles in
a more complex path). This proves the first case of the theorem.

\medskip
Now assume that $D_\K$ is of type II. If a tangle $T_i$ ends with
parallel (anti-parallel)  vertical twists then after
nullification the arc with one end at the NW corner will connect
to the SW corner (NE) corner. If we assume that $e=0$ then we can
see that after nullification there will be exactly $p$ of the
large Seifert circles when $p$ is nonzero and exactly 2 if $p=0$.
If $e\ne 0$ then there will be an additional $|e|-1$ small
circles and the number of large Seifert circles changes by minus
one if $p=0$ and increases by plus one if $p\ne 0$. This proves
the second case of the theorem.
\end{proof}

\begin{corollary}\label{C1}
Let $\K$ be the Montesinos link represented by the diagram
$D_\K=K(\frac{\beta_1}{\alpha_1},
\frac{\beta_2}{\alpha_2},\ldots, \frac{\beta_t}{\alpha_t}, e)$,
where $|\frac{\beta_i}{\alpha_i}|<1$ and $e$ is an integer. Then
we have
$$
n_d({\K})\le\left\{
\begin{array}{ll}
\sum_{i=1}^t( \sum_{j\in P_i} (|a_{i,j}|-1)+|A_i|)+|e|-1& {\rm if}\ D_\mk {\rm is\ of\ type\ I},\\
\sum_{i=1}^t( \sum_{j\in P_i} (|a_{i,j}|-1)+|A_i|) - c+1& {\rm if}\ D_\mk {\rm is\ of\ type\ II},
\end{array}
\right.
$$
where $c=2$ if $e=0$ and all tangles end with anti-parallel
vertical twists, and $c=0$ otherwise. Moreover if $D_\mk$ is
alternating,  then we have equality.
\end{corollary}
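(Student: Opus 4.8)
The plan is to deduce the bound directly from the general diagrammatic inequality $n_D\le Cr(D)-s(D)+1$ applied to the Montesinos diagram $D_\K$, with the Seifert circle count supplied by Theorem \ref{T2}; the whole matter then reduces to a bookkeeping computation of $Cr(D_\K)-s(D_\K)+1$. First I would count the crossings of $D_\K$. In each tangle $T_i$ the half-twist boxes split into the parallel boxes indexed by $P_i$ and the anti-parallel boxes indexed by $A_i$, and the box $e$ contributes $\abs{e}$ crossings, so
\[
Cr(D_\K)=\sum_{i=1}^t\Bigl(\sum_{j\in P_i}\abs{a_{i,j}}+\sum_{j\in A_i}\abs{a_{i,j}}\Bigr)+\abs{e}.
\]

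Next I would substitute the value of $s(D_\K)$ from Theorem \ref{T2} into $Cr(D_\K)-s(D_\K)+1$. The computation hinges on the two elementary identities $\sum_{j\in A_i}\abs{a_{i,j}}-\sum_{j\in A_i}(\abs{a_{i,j}}-1)=\abs{A_i}$ and $\sum_{j\in P_i}\abs{a_{i,j}}-\abs{P_i}=\sum_{j\in P_i}(\abs{a_{i,j}}-1)$. In the type I case the constant $+\abs{e}-2+1$ collapses to $\abs{e}-1$, while in the type II case the $\abs{e}$ terms cancel and the correction $-c+1$ survives. In both cases one lands exactly on the right-hand sides in the statement, so that $Cr(D_\K)-s(D_\K)+1$ equals the claimed bound.

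To finish the inequality I would invoke $n_d(\K)\le n_{D_\K}$ together with $n_{D_\K}\le Cr(D_\K)-s(D_\K)+1$, using that a reduced Montesinos diagram is a minimum diagram of $\K$. For the equality assertion under the alternating hypothesis I would instead appeal to formula (\ref{solaeq}): when $D_\K$ is a reduced alternating diagram it is minimum, so $n_d(\K)=Cr(D_\K)-s(D_\K)+1$, which we have just shown equals the right-hand side, forcing equality.

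The main obstacle I anticipate is not the algebra but the passage from the diagram $D_\K$ to the invariant $n_d(\K)$ in the non-alternating case: one must be certain that the quantity $Cr(D_\K)-s(D_\K)+1$, computed from a diagram that need not realize $Cr(\K)$, still dominates $n_d(\K)$. This is where the normalization $\abs{\beta_i/\alpha_i}<1$ and the minimality of reduced Montesinos diagrams have to be used with care. In addition, the orientation bookkeeping inherited from Theorem \ref{T2} (the special role of the last entry $a_{i,n_i}$ and the precise definition of the correction constant $c$) must be tracked so that the parallel/anti-parallel splitting of the crossings matches consistently on both sides of the identity.
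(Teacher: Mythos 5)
Your proposal is correct and takes essentially the same route as the paper: the paper's own (two-line) proof likewise combines Theorem \ref{T2} with the relation $n_D(\K)=Cr(D)-s(D)+1$ for reduced alternating diagrams (and, implicitly, the general inequality $n_D\le Cr(D)-s(D)+1$), leaving exactly the crossing-count bookkeeping you spelled out to the reader. If anything, your version is more careful than the paper's, since you make the algebra explicit and you flag the one point the paper glosses over entirely --- that passing from $n_{D_\K}$ to $n_d(\K)$ in the non-alternating case requires knowing the Montesinos diagram is minimal.
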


\medskip
\begin{proof}
It suffices to prove the statement of equality for an alternating
Montesinos link. In this case the Corollary follows from Theorem
\ref{T2} and the relationship $n_D(\K)=Cr(D)-s(D) +1$ for any
alternating reduced  diagram $D$ of the alternating $\K$.
\end{proof}

\medskip
Since a 4-plat is a Montesinos link that contains only one
rational tangle, we have the following. (Note that a 4-plat is
also  obtained if there are two rational tangles in the
Montesinos link. However this is not important in this context,
see \cite{BZ}.)

\medskip
\begin{corollary}\label{4plats}
Let $\K$ be the 4-plat defined by the vector
$(a_1,a_2,\ldots,a_n)$, $P$ be the set of indices $i$ such that
$a_i$ consists of  parallel crossings and $A$ be the set of
indices $i$ such that $a_i$ consists of anti-parallel crossings.
Then
$$
n_d(\mk)=\sum_{i\in P}(|a_i|-1)+|A|.
$$
\end{corollary}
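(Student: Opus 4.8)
The plan is to exploit the fact that a 4-plat is an alternating link, so that its diagram nullification number is governed by the clean formula (\ref{solaeq}). Since the standard 4-plat diagram $D$ associated to the vector $(a_1,a_2,\ldots,a_n)$ is a reduced alternating diagram, (\ref{solaeq}) gives $n_d(\mk)=Cr(D)-s(D)+1$, and $D$ has exactly $Cr(D)=\sum_{i=1}^n|a_i|$ crossings. Since the index set is partitioned into $P$ and $A$, the entire statement reduces to establishing the Seifert circle count
$$
s(D)=\sum_{i\in A}(|a_i|-1)+|P|+1,
$$
after which direct substitution yields $n_d(\mk)=\sum_{i\in P}(|a_i|-1)+|A|$.

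To count $s(D)$, I would specialize the Seifert circle analysis from the proof of Theorem \ref{T2} to the case of a single rational tangle (a 4-plat being precisely a Montesinos link with $t=1$), sorting the Seifert circles into the same three families. First, each anti-parallel region $a_i$ contributes exactly $|a_i|-1$ small Seifert circles, accounting for the term $\sum_{i\in A}(|a_i|-1)$, while each parallel region produces none. Next, the medium-sized circles---those confined to the single tangle but arising from parallel crossings---number $|P|-p$, where $p\in\{0,1\}$ records whether the final region $a_n$ is parallel; this is the one-tangle instance of part (ii) of the proof of Theorem \ref{T2}.

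The crux is then to show that the number of large Seifert circles equals $1+p$, so that the medium and large contributions combine to the single clean term $|P|+1$ independently of $p$. This is where the 4-plat (rational) closure must be distinguished from the general Montesinos closure of Theorem \ref{T2}: with only one tangle and no auxiliary $e$-twists, the ``$+2$'' of two large circles in the type I count of Theorem \ref{T2} collapses, leaving a single large circle when $a_n$ is anti-parallel ($p=0$) and a second large circle reappearing exactly when $a_n$ is parallel ($p=1$). I expect this global tracing of the large circles through the plat closure---rather than the purely local small/medium bookkeeping---to be the main obstacle, since it is the step genuinely sensitive to the closure and is responsible for the ``$+1$'' in $s(D)$.

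Finally, I would record low-crossing checks that pin down the additive constant: the trefoil $(3)$ with one anti-parallel region gives $s(D)=3$ and $n_d=1$; the Hopf link $(2)$ with one parallel region gives $s(D)=2$ and $n_d=1$; and the figure-eight knot $(2,2)$ with two anti-parallel regions gives $s(D)=3$ and $n_d=2$. Each agrees with both the claimed Seifert count and the target formula, confirming that no stray constant has been dropped. Assembling these steps with (\ref{solaeq}) completes the proof, and because $D$ is alternating the resulting relation is an equality rather than merely the inequality of Corollary \ref{C1}.
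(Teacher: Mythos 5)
Your reduction via (\ref{solaeq}) to the Seifert-circle count $s(D)=\sum_{i\in A}(|a_i|-1)+|P|+1$ is correct, and in spirit this is the paper's route as well. But the step you yourself call the crux --- that the plat closure produces exactly $1+p$ large Seifert circles, where $p\in\{0,1\}$ records whether $a_n$ is parallel --- is never proved; you only announce it and ``expect'' it to be the main obstacle. This is a genuine gap, not a routine verification: Theorem \ref{T2} cannot be quoted for your configuration, because its hypothesis $|\frac{\beta_i}{\alpha_i}|<1$ forces every tangle to end in vertical twists (vector $(a_{i,1},\ldots,a_{i,n_i},0)$), whereas your single tangle $(a_1,\ldots,a_n)$ ends in the horizontal twists $a_n$; in particular, the exceptional case in the proof of Theorem \ref{T2} (a last parallel entry sends both of its arcs into large circles), on which your medium/large bookkeeping rests, was derived only for tangles ending in vertical twists. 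The paper's proof is designed precisely to avoid this re-derivation: it writes the 4-plat as the Montesinos diagram $K(\frac{\beta_1}{\alpha_1},e)$ with $\frac{\beta_1}{\alpha_1}=(a_1,\ldots,a_{n-1},0)$ and $e=a_n$, so that Corollary \ref{C1} applies verbatim, and then only has to observe that $e$ contributes $|e|-1=|a_n|-1$ when the diagram is of type I ($e$ parallel) and the $+1$ when it is of type II ($e$ anti-parallel), with the degenerate case $a_{n-1}=0$ (an $(e,2)$ torus link) checked separately --- a case your argument also omits.

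Second, the numerical checks you offer to ``pin down the additive constant'' cannot do so, and one of them is false. For the 4-plat $(3)$ (the trefoil) the single twist region is parallel, not anti-parallel: smoothing one crossing of the standard $3$-crossing diagram yields a Hopf link, not an unknot, and indeed $n_d\ge n\ge|\sigma|=2$ by Theorem \ref{signature}, so $n_d=2$ and $s(D)=2$. (Moreover $s(D)=3$ is impossible for any $3$-crossing knot diagram, since $Cr(D)-s(D)+1$ equals twice the genus of the surface produced by Seifert's algorithm and hence is even.) Note that if your assignment were correct, the very corollary you are proving would give $n_d=0+|A|=1$ for the trefoil, i.e.\ your check would refute the statement rather than confirm it. The figure-eight check is arithmetically consistent but does not discriminate: every parallel/anti-parallel assignment of the two regions of $(2,2)$ yields the same value $n_d=2$. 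So the low-crossing examples cannot substitute for the missing analysis of the large circles through the closure, and that analysis (or the paper's reduction to Corollary \ref{C1}) is what an actual proof still requires.
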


\begin{proof}
Consider the 4-plat as the Montesinos link given by $D_\K=
K(\frac{\beta_1}{\alpha_1}, e)$, where
$\frac{\beta_1}{\alpha_1}=(a_1,a_2,\ldots,a_{n-1},0)$ and
$e=a_n$. If $a_{n-1}$ is  zero then $\K$ is just an $(e,2)$ torus
link and the statement is true. If $a_{n-1}$ is not zero then we
apply Corollary \ref{C1}. If $D_\K$ is of type I then $e$ is
parallel and in the formula of Corollary \ref{C1} we count $e$ as
parallel and $|e|-1=|a_n|-1$ in the formula of the Corollary. If
$D_\K$ is of type II then $e$ is anti-parallel and will be
counted as the $+1$ in Formula in Corollary \ref{C1}.
\end{proof}

\medskip
\section{The Nullification Numbers and other Link Invariants}

In this section we explore further the relationships among the
three nullification numbers $n(\K)$, $n_r(\K)$ and $n_d(\K)$,  as
well as their relationships with some other link invariants.

\medskip
\subsection{The case of alternating links.} First let us consider the alternating links. We have the following theorem.

\begin{theorem}\label{alterNull}
If $\K$ is an alternating link then we have $n_d(\mk)=n_r(\mk)$.
\end{theorem}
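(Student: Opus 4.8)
The plan is to prove the reverse inequality $n_d(\mk)\le n_r(\mk)$, since the paper has already recorded $n_r(\mk)\le n_d(\mk)$ (indeed, smoothing a minimum diagram $D$ of $\mk$ that realizes $n_d$ in a single step is itself a legal restricted procedure). Equality then follows at once. The strategy is to manufacture a genuine link invariant that (i) drops by at most one under a single smoothing, (ii) is unchanged by ambient isotopy, and (iii) equals $n_d(\mk)$ when $\mk$ is alternating. The natural candidate is $\gamma(\mk):=2g(\mk)+\nu-1$, where $g$ is the genus and $\nu$ the number of components; equivalently $\gamma(\mk)=1-\chi^{*}(\mk)$, with $\chi^{*}(\mk)$ the maximal Euler characteristic among Seifert surfaces of $\mk$. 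Property (iii) is immediate from (\ref{genuseq}): $2g(\mk)+\nu-1=n_d(\mk)$ for every alternating link, so $\gamma$ lands on exactly the target value.

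The crucial step is (i): a single smoothing changes $\gamma$ by at most one. I would argue this geometrically. Because a smoothing is the oriented (Seifert) resolution of a crossing, the link $\mk$ and its smoothed image $\mk_0$ cobound a single saddle. Attaching this band to a maximal-$\chi^{*}$ Seifert surface for either link produces a Seifert surface for the other whose Euler characteristic has changed by exactly $1$; running the construction in both directions yields $\chi^{*}(\mk_0)\ge\chi^{*}(\mk)-1$ and $\chi^{*}(\mk)\ge\chi^{*}(\mk_0)-1$, hence $\abs{\gamma(\mk)-\gamma(\mk_0)}\le 1$. Property (ii) is clear, since $\gamma$ is a link invariant and the intermediate passage to a minimum diagram in the restricted procedure is an ambient isotopy.

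With these in hand the conclusion telescopes. Consider any restricted nullification of $\mk$ using $N$ smoothings and terminating at a trivial $k$-component diagram, that is, a $k$-component unlink with $\gamma=1-k\le 0$. Applying the one-step bound across the (at most $N$) smoothings and the intervening isotopies gives $1-k=\gamma(\text{unlink})\ge\gamma(\mk)-N=n_d(\mk)-N$, so $N\ge n_d(\mk)+k-1\ge n_d(\mk)$ because $k\ge 1$. Minimizing over all restricted procedures gives $n_r(\mk)\ge n_d(\mk)$, and combined with $n_r(\mk)\le n_d(\mk)$ this proves the theorem. (I note in passing that this lower bound never used minimality of the intermediate diagrams, so the same computation in fact forces $n(\mk)=n_r(\mk)=n_d(\mk)$ for non-split alternating links.)

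The hard part will be making the "drops by at most one" lemma airtight for the intermediate links, which are in general neither alternating nor even non-split, where a maximal-$\chi^{*}$ Seifert surface may be disconnected and so the identity $\chi^{*}=1-(2g+\nu-1)$ requires care. I would sidestep this by working throughout with $\chi^{*}$ itself, for which the band argument of (i) is unconditional, and only at the very end invoke that a reduced alternating diagram of a non-split alternating link realizes $\chi^{*}$ via Seifert's algorithm (so that $\gamma(\mk)=n_d(\mk)$ genuinely holds), reducing any split case to nullifying its non-split summands separately and summing.
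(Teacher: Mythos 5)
Your reduction to proving $n_r(\mk)\ge n_d(\mk)$ is the right first move, but the lemma that carries all the weight --- that a single smoothing changes $\gamma(\mk)=1-\chi^{*}(\mk)$ by at most one --- is false, and the paper itself contains the counterexample. The 4-plat $(-k,-2,-1,2,k)$ of Figure \ref{nullification4Plat} is an alternating knot of genus $k$ (it is $10_{22}$ for $k=3$), so $\gamma(\mk)=2k$; yet smoothing the one circled crossing of that (non-minimum) diagram yields a trivial two-component link, for which $\gamma=1-2=-1$. One smoothing thus drops $\gamma$ by $2k+1$, and it is exactly your needed inequality $\chi^{*}(\mk)\ge\chi^{*}(\mk_0)-1$ that fails. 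The geometric error is in the band argument: the saddle realizing the (un)smoothing is attached along the link, but its interior cannot be assumed disjoint from a chosen maximal Seifert surface of $\mk_0$. In the example above, gluing the band onto the two disjoint disks bounded by the unlink would produce an embedded surface of Euler characteristic $1$ for the knot, i.e.\ a disk, forcing the knot to be trivial; in reality the band pierces the disks and the union is only a ribbon-immersed surface, and removing the intersection points by tubing costs Euler characteristic. What the saddle-cobordism argument genuinely controls is the four-ball analogue of $\chi^{*}$ (slice genus), which does not compute $n_d$ of alternating links, so your proposed fallback of ``working with $\chi^{*}$ itself'' does not rescue the step --- the problem is embeddedness, not the bookkeeping for split links.

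There is also a structural reason why no repair of this strategy can succeed: as you observe yourself, your telescoping never uses that the intermediate diagrams are minimum, so it would prove $n(\mk)\ge n_d(\mk)$, hence $n=n_r=n_d$, for non-split alternating links. That conclusion is refuted by the theorem immediately following this one in the paper, where the knots above have $n(\mk)=1$ but $n_r(\mk)=n_d(\mk)=2k$. Any correct proof must therefore exploit minimality of the intermediate diagrams, and this is precisely what the paper does: smoothing crossings of an alternating diagram leaves an alternating diagram with the same Seifert circles; a minimum diagram of an alternating link is alternating, and after reduction moves it is reduced alternating, hence unique up to flypes; and Sola's formula $n_D=Cr(D)-s(D)+1$ then shows the quantity $Cr-s+1$ drops by exactly one per smoothing and is preserved by reduction moves and flypes, so no restricted procedure can beat $n_d(\mk)$. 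The contrast with Theorem \ref{signature} is instructive: the signature argument survives the same band issue because $\sigma$ can be computed from \emph{any} Seifert surface (in particular the projection surface of the diagram, from which the band really is attached disjointly), whereas $\chi^{*}$ is realized only by maximal surfaces, which need not see the band at all.
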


\medskip
\begin{proof}
Since we already have $n_r(\K)\le n_d(\K)$, it suffices to show
that $n_r(\mk)\ge n_d(\mk)$. Let $cr(\K)=n$. Assume  that
$n_r(\mk)$ is obtained by smoothing crossings $c_1$, $c_2$,...,
$c_m$ first in a reduced alternating diagram $D$ of $\K$. This
results in a diagram $D_1$ with $n-m$ crossings that is still
alternating. Assume further that $n_r(\mk)$ is obtained by
deforming $D_1$ to a minimum diagram $D_2$ and some crossings in
it are then smoothed. $D_2$ is necessarily alternating since $D_1$
is alternating and $D_2$ share the same knot type with $D_1$. On
the other hand, $D_1$ can be changed to a reduced alternating
diagram $D_1^\prime$ by performing all possible reduction moves
as shown in Figure \ref{RMonereduction}. $D_1^\prime$ is also
minimum since it is reduced and is alternating. Thus it is flype
equivalent to the diagram $D_2$. Note that for each crossing
reduction move as shown in Figure \ref{RMonereduction}, one
crossing is removed and the number of Seifert circles is reduced
by one at the same time. Thus $m+n_{D_2}=m+Cr(D_2)-s(D_2)
+1=m+Cr(D_1^\prime)-s(D_1^\prime) +1=s+Cr(D_1)-s(D_1)
+1=Cr(D)-s(D) +1=n_d(\mk)$. Therefore no reduction in the number
of nullification steps can be gained by moving to the diagram
$D_2$. Since $D_2$ is still alternating, moving to other minimum
diagrams after smoothing some crossings in it will not result in
a nullification number reduction either by the same argument.
\end{proof}

\begin{figure}[h!]
\begin{center}
\includegraphics[scale=.4]{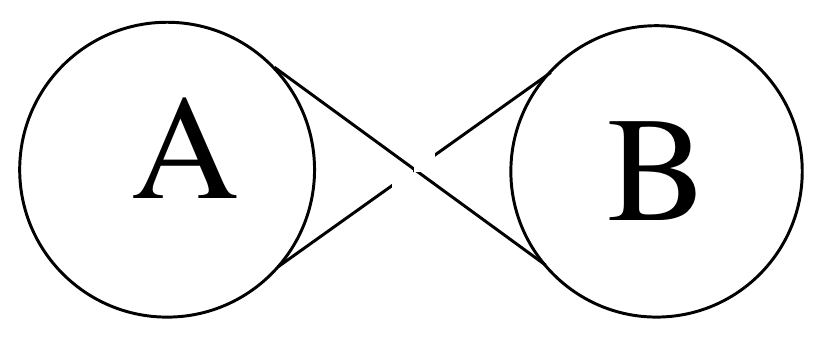}
\caption{A reducible alternating diagram contains a single
crossing that splits the diagram into two parts. Rotation of one
part (either $A$ or $B$) in a proper direction by $180^\circ$
will eliminate one crossing while  preserving the alternating
property of the diagram.} \label{RMonereduction}
\end{center}
\end{figure}

\medskip
On the other hand, even for alternating links, the difference
between $n(\mk)$ and $n_r(\mk)$ can be as large as one wants.

\medskip
\begin{theorem}
For any given positive integer $m$, there exists an alternating
knot $\mk$ such that $n(\K)-n_r(\mk)=n(\K)-n_d(\mk)>m$.
\end{theorem}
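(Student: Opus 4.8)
The plan is to manufacture, for each prescribed $m$, an alternating knot whose Seifert genus (and therefore $n_d$) is large while its general nullification number stays small. Since Theorem~\ref{alterNull} gives $n_r=n_d$ for alternating knots, and since the meaningful nonnegative difference here is $n_r(\mathcal K)-n(\mathcal K)=n_d(\mathcal K)-n(\mathcal K)$ (recall $n\le n_r$), it suffices to construct alternating knots with $n_d(\mathcal K)-n(\mathcal K)>m$. Producing a large $n_d$ is the easy half: by (\ref{genuseq}) an alternating knot has $n_d(\mathcal K)=2g(\mathcal K)$ with $g$ its Seifert genus, and both $g$ and $n_d$ are additive under connected sum, so any high-genus alternating knot has correspondingly large $n_d$.

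The real content is an upper bound on $n(\mathcal K)$, and for this I would reinterpret the general nullification number as a cobordism count. A single oriented smoothing (Figure~\ref{smooth}) is nothing but a saddle/band move, while the isotopies permitted between smoothings do not alter link type; hence $n(\mathcal K)$ equals the least number of oriented saddle moves carrying $\mathcal K$ to an unlink. Capping the terminal $u$-component unlink with disks and computing Euler characteristics (each of the $s$ saddles drops $\chi$ by one, each cap raises it by one) produces a connected surface bounded by $\mathcal K$ of genus $(s-u+1)/2$; pushing it into the $4$-ball gives $n(\mathcal K)\ge 2g_4(\mathcal K)$, where $g_4$ is the smooth slice genus. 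Thus $n(\mathcal K)$ is forced to be small exactly when $\mathcal K$ is near-slice, no matter how large its Seifert genus is. For the upper bound the same picture is decisive: if $\mathcal K$ is a ribbon knot, then severing the bands of a ribbon disk is a legal nullification, so $n(\mathcal K)$ is at most the fusion number of $\mathcal K$.

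I would then take the explicit family $K_N=\#^{\,N}(3_1\#\overline{3_1})$, the $N$-fold connected sum of square knots. Each $K_N$ is alternating (a connected sum of alternating trefoils) with Seifert genus $2N$, so $n_d(K_N)=4N$; this can also be read off from Lemma~\ref{nDrelationtopoly}, the Conway polynomial of $K_N$ being $(z^2+1)^{2N}$. On the other hand the square knot is the symmetric union of the trefoil, hence a ribbon knot of fusion number one: a single oriented smoothing turns a suitable diagram of $3_1\#\overline{3_1}$ into the two-component unlink. Because fusion number is subadditive under connected sum, $n(K_N)\le N$, and therefore $n_r(K_N)-n(K_N)=n_d(K_N)-n(K_N)\ge 4N-N=3N$. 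Choosing $N>m/3$ (for instance $N=m$) yields an alternating knot with gap exceeding $m$, as required.

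The step I expect to be the main obstacle is precisely the upper bound $n(K_N)\le N$, i.e. exhibiting the economical nullification. This requires (i) justifying that every saddle move is realizable as an oriented smoothing of an honest diagram, so that the cobordism reformulation is faithful to the definition of $n$, and (ii) verifying the ribbon/fusion data of the building block and its behavior under connected sum. It is worth noting, however, that the conclusion is robust: any constant bound $n(3_1\#\overline{3_1})\le c$ with $c<4$ already forces $n(K_N)\le cN$ and hence a gap of size $(4-c)N$ that grows without bound, so the argument does not hinge on the exact fusion number.
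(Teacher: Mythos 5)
Your proof is correct, but it takes a genuinely different route from the paper's. The paper argues by explicit example: the 4-plats $(-k,-2,-1,2,k)$ can be nullified with a single smoothing (Figure~\ref{nullification4Plat}), and they are isotopic to the alternating 4-plats $(1,k-1,3,1,k)$, whose diagram nullification number is computed to be $2k$ from Corollary~\ref{4plats} (equivalently formula~(\ref{solaeq})); together with Theorem~\ref{alterNull} this yields a gap of $2k-1>m$. You instead take $K_N=\#^{N}(3_1\#\overline{3_1})$ and split the work into two conceptual halves: the lower bound $n_r(K_N)=n_d(K_N)=4N$ from additivity of the genus, formula~(\ref{genuseq}) and Theorem~\ref{alterNull}, and the upper bound $n(K_N)\le N$ from the ribbon-knot circle of ideas that the paper itself only develops in Section~5 (band surgery is equivalent to smoothing, $n(\K)\le \mathrm{rf}(\K)$ for ribbon knots, the square knot has fusion number one, and fusion number is subadditive under connected sum). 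Both arguments are sound, and both read the theorem's (mis-stated) difference as $n_r-n=n_d-n$. What each buys: the paper's examples are prime 2-bridge knots realizing the extreme value $n=1$, with a gap growing essentially like the crossing number, though the verification rests on checking the isotopies and the single nullifying crossing in Figures~\ref{nullification4Plat} and~\ref{4platexample}; your construction is assembled from standard, robust facts with no figure-checking, at the price of composite examples and of a reliance on subadditivity of the fusion (or nullification) number under connected sum --- a fact that is true and easy to prove (fuse $K_1$ into one unlink component of the ribbon presentation of $K_2$, or nullify one factor at a time inside a ball meeting the link in two points), but which the paper never states, so you should supply its short proof to make your argument self-contained. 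Also note that your slice-genus inequality $n(\K)\ge 2g_4(\K)$, while correct, is motivation only and is not needed anywhere in the proof.
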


\begin{proof} As shown in Figure \ref{nullification4Plat},
the 4-plats of the vector form $(-k,-2,-1,2,k)$ all have general
nullification number one, where $k$ is any positive integer.
\begin{figure}[h!]
\begin{center}
\includegraphics[scale=.4]{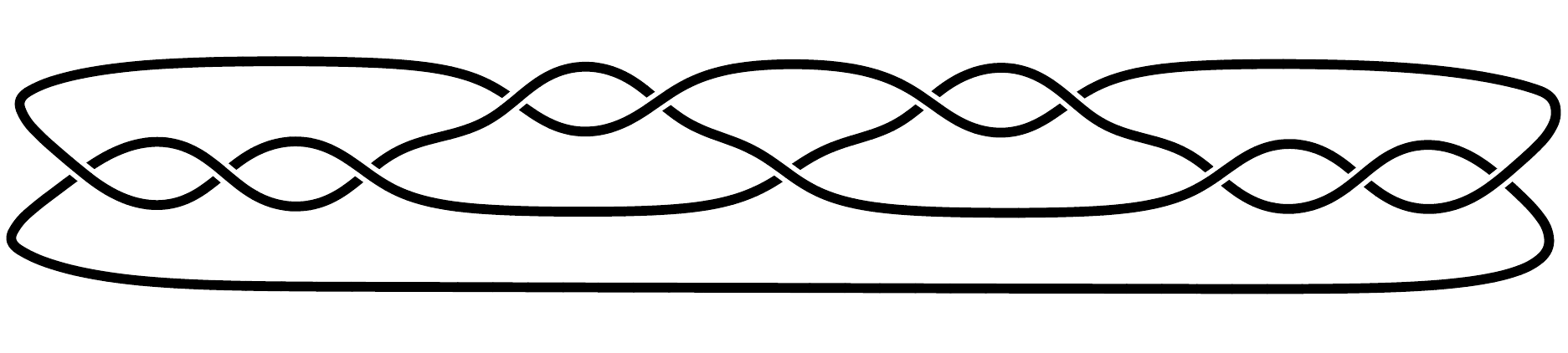}
\caption{\label{nullification4Plat} The 4-plats $(-k,-2,-1,2,k)$
have nullification number one. Here  an example with $k=3$ is
shown. Smoothing the crossing in the center realizes $n(\K)=1$. }
\end{center}
\end{figure}

Notice that $(-k,-2,-1,2,k)$ can be isotoped to $(1,k-1,3,1,k)$
as shown in Figure \ref{4platexample},  which is alternating.
Thus we have an alternating knot $\mk$ with general nullification
number one.  For the minimum diagram of $\mk$ given in Figure
\ref{4platexample} we have, by Corollary \ref{4plats} or equation
\ref{solaeq}, $n_r(\mk)=n_d(\mk)=(2k+4)-5+1=2k$. So the
difference between the diagram nullification number and the
general nullification number is $>m$ if $k>(m+1)/2$.
\end{proof}

\begin{figure}[h!]
\begin{center}
\includegraphics[scale=.4]{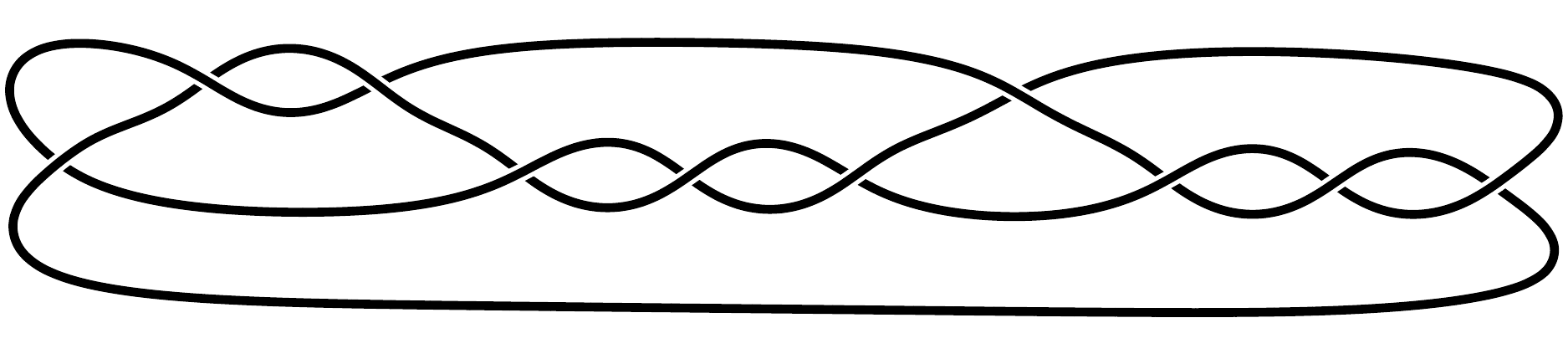}
\caption{\label{4platexample} The 4-plat of Figure
\ref{nullification4Plat}  in a minimum diagram with vector form
$(1,k-1,3,1,k)$ for $k=3$.}
\end{center}
\end{figure}

\medskip
\subsection{The case of non-alternating links.} In order to show that $n_r(\mk)$ and $n_d(\mk)$ are indeed different in general, we need to demonstrate the existence of knots/links $\mk$ such that $n_r(\mk)<n_d(\mk)$. Because of Theorem \ref{alterNull}, such examples can only be found in non-alternating knots and links. Worse, there are no known methods or easy approaches in finding such examples. This subsection is thus devoted to the construction of one single such example.

\medskip
First, let us observe that the knot $8_{20}$ has the following special property. The left of Figure \ref{820isotopy} shows that $n_d(8_{20})=n_r(8_{20})=n_d(8_{20})=1$.
However, after applying a simply isotopy as shown on the right side of Figure \ref{820isotopy}, the new diagram can no longer be nullified by smoothing only one crossing. One has to smooth two crossings.

\begin{figure}[h!]
\begin{center}
\includegraphics[scale=0.8]{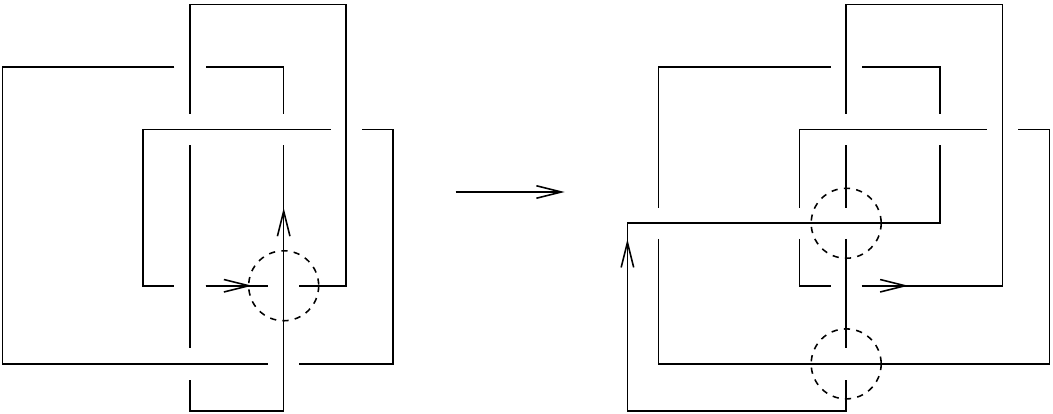}
\caption{\label{820isotopy}Left: A minimum knot projection $M$ of the knot $8_{20}$ that can be nullified by smoothing the crossing circled. Right: A non-minimum diagram $N$ of $8_{20}$ that requires the smoothing of two crossings in order to be nullified (two such crossings are marked with circles).}
\end{center}
\end{figure}

Now we would like to construct an example using this observation.
We construct a three component link $L$ by adding two simple closed curves to $N$ as shown in Figure \ref{speciallink} (drawn by thickened lines). The diagram $D_L$ of
Figure \ref{speciallink} can be shown to be adequate hence is minimum \cite{Crom}. Assign the orientations to the
components as shown in Figure \ref{speciallink}.

\begin{figure}[h!]
\begin{center}
\includegraphics[scale=0.8]{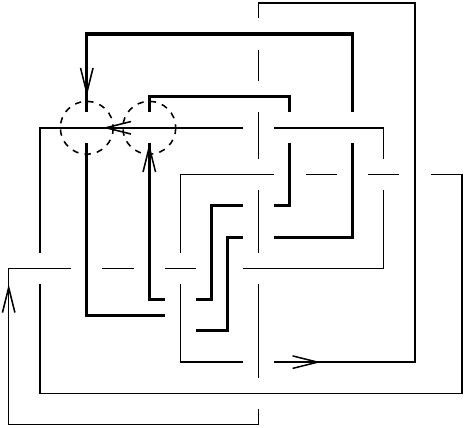}
\caption{\label{speciallink}A 3-component link $L$ with $n_r(L)\le 3$ and $n_d(L)=4$.}
\end{center}
\end{figure}

\medskip
It is easy to see that nullifying the two adjacent crossings marked in Figure \ref{speciallink} allows the
new components to be removed by an ambient isotopy. The resulting diagram is $N$ which can be further isotoped to $M$. Thus by the definition of
$n_{r}$ we have $n_{r}(L)\le 3$.

\medskip
Lacking a more elegant method, we took a programmatic approach for the confirmation that $n_D(L)>3$. First, we observe that the diagram of Figure \ref{speciallink} is the only minimum
diagram of $L$ up to trivial isotopies. This follows from the fact that if we remove either one of the
new components we obtain an alternating  and hence minimum
diagram that admits no flypes. Thus any diagram of such a two
component link has to look like the one shown. The two additional
components are ``parallel'' and therefore the only change we can
make is to exchange them. This however does not change the
diagram. We then implemented the nullification procedure which yielded all knot/link diagrams obtained by all possible combinations of 3 or less crossing smoothing steps on $D_L$. Using a Gauss code modification program and the Mathematica\copyright  KnotTheory package's Jones polynomial
computation we were able to verify that none of  these resulted in a knot/link with
the polynomial of a trivial knot/link. Since $n_d(L)\le 4$ by our construction of $L$, we have
shown that $n_d(L)=4$. Thus we have shown an example of a three component link $L$ with $n_r(L)\le 3$ and $n_d(L)=4$.

\medskip
\subsection{The genus, unknotting number and the signature vs $n(\mk)$.}

\medskip
There is a simple inequality between the nullification number and the unknotting number:

\begin{lemma}\label{unknot}
Let $\mk$ be any knot then $n(\mk)\le 2 u(\mk)$, where $u$ denotes the unknotting number of a knot.
\end{lemma}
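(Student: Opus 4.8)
The plan is to induct on the unknotting number $u=u(\mk)$. The base case $u=0$ is immediate: then $\mk$ is the unknot and $n(\mk)=0$. For the inductive step, choose a diagram of $\mk$ together with a crossing $c$ in it whose switching produces a knot $\mk'$ with $u(\mk')=u-1$; such a crossing exists by the definition of the unknotting number, and by the inductive hypothesis $n(\mk')\le 2(u-1)$.

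The linchpin connecting crossing changes to nullifications is the observation that the oriented smoothing forgets the over/under information at a crossing. Consequently the diagram obtained by smoothing $c$ in $\mk$ is literally the same diagram as the one obtained by smoothing $c$ in $\mk'$; call the resulting link $L$. Smoothing $c$ first and then nullifying $L$ optimally gives $n(\mk)\le 1+n(L)$. Thus it suffices to establish the key estimate
$$
n(L)\le n(\mk')+1,
$$
namely that performing one additional smoothing raises the general nullification number by at most one. Granting this, I obtain $n(\mk)\le 1+n(L)\le 1+(n(\mk')+1)=n(\mk')+2\le 2(u-1)+2=2u$, which is the claim.

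To prove the key estimate I would start from an optimal nullification of $\mk'$, that is, a finite sequence of ambient isotopies together with $n(\mk')$ smoothings terminating in a trivial link, and attempt to run the parallel process beginning from $L$. Since $L$ differs from $\mk'$ only by having the site of $c$ already smoothed, every ambient isotopy in the sequence and every smoothing performed at a crossing other than $c$ can be imitated, the smoothed $c$-site merely being carried along; and if the sequence ever smooths $c$ itself, the two processes coincide from that point on, so no smoothing is wasted there. This shows the estimate holds, and usually with room to spare, whenever the isotopies of $\mk'$ never genuinely interact with the site of $c$.

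The hard part will be precisely those steps of the $\mk'$-process that do involve the crossing $c$ (a Reidemeister move that eliminates $c$ or drags a strand through it), where the parallel process for $L$ cannot directly copy $\mk'$. My expectation is that any such discrepancy can be absorbed by a single extra smoothing near the $c$-site, and that this is exactly where the ``$+1$'' in the key estimate is spent; making this local analysis rigorous---and in particular verifying that one additional smoothing always suffices, no matter how the isotopies of $\mk'$ entangle the carried-along site---is the crux of the whole argument. As a sanity check, the bound is already sharp for the trefoil, where $u=1$ yet two smoothings are genuinely needed, since a single oriented smoothing of a knot produces a two-component link rather than an unlink.
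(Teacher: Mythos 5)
Your inductive scaffolding is fine, but it reduces the lemma to exactly the statement that carries all of the mathematical content, and that statement is left unproven. Your ``key estimate'' $n(L)\le n(\mk')+1$ is not something that can be obtained by shadowing an optimal nullification sequence of $\mk'$ from $L$: the crossing $c$ is a genuine crossing of $\mk'$ but a fused (smoothed) site in $L$, so the moment an isotopy in the $\mk'$-sequence interacts with $c$ --- say a Reidemeister II move that pulls the two strands of $c$ apart, which is impossible in $L$ where they are reconnected --- the two processes diverge globally, and after that there is no well-defined ``$c$-site'' left in either diagram at which an extra smoothing could be spent. You acknowledge this is the crux, but what you offer for it is an expectation, not an argument; as written the proof has a genuine gap.

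The gap is closed by a purely local picture, and this picture is in fact the paper's entire proof: a strand passage (crossing change) can be realized by two nullification moves. Concretely, smoothing $c$ in $\mk$ yields the two coherently oriented parallel strands of $L$; now perform a Reidemeister II move on those two strands to create a clasp and smooth one of the two clasp crossings --- the result is the original crossing re-instated with the opposite sign, i.e.\ the diagram of $\mk'$. This simultaneously proves your key estimate $n(L)\le 1+n(\mk')$ and, more directly, $n(\mk)\le 2+n(\mk')$, which makes the induction along an unknotting sequence immediate (this is how the paper argues, via its Figure of the strand passage). Equivalently, you could invoke the fact that a nullification move is the same as a coherent band surgery, and band surgery is reversible along the dual band, so the one-move relation between $\mk'$ and $L$ is symmetric. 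Separately, your sanity check for the trefoil is incorrectly justified: a single oriented smoothing of a knot can perfectly well produce the trivial two-component link (knots with $n=1$ exist in abundance; the paper devotes Section 5 to them). For the trefoil the real obstruction is the signature bound $n(\mk)\ge|\sigma(\mk)|=2$.
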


{\bf Proof:}
It suffices to show that a strand passage can be realized by two nullification moves. This is shown in Figure \ref{strandpassage}.

\begin{figure}[h!]
\begin{center}
\includegraphics[scale=.35]{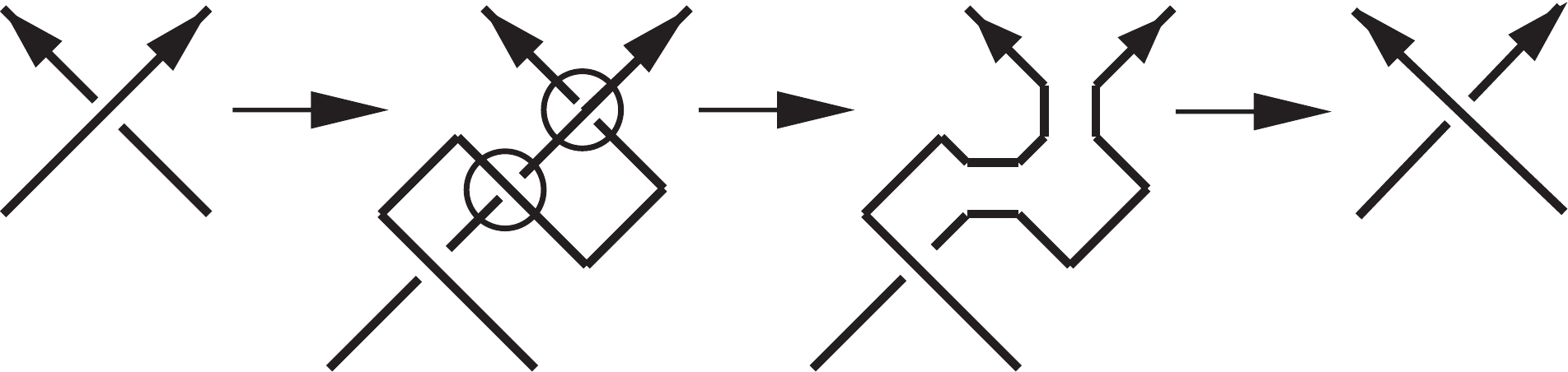}
\caption{\label{strandpassage} Nullifying two crossings is equivalent to a strandpassage.}
\end{center}
\end{figure}

\medskip
The next theorem shows that an inequality as given in Lemma \ref{unknot} does not exist the other way around, that is the unknotting number can not be bound from above by a multiple of the general nullification number.

\medskip
\begin{theorem}
For any given positive integer $m$, (1) there exists an
alternating knot $\mk$  such that $g(\K)-n(\K)>m$ where $g(\K)$
is the genus of $\mk$ and (2) there exists a link $\mk$ such that
$u(\mk)-n(\mk)>m$. In other words, the general nullification
number does not impose a general upper bound on the genus and the unknotting number of a link.
\end{theorem}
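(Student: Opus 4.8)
The plan is to prove the two assertions with two explicit infinite families, each built from examples already available in the paper, arranged so that the nullification number stays equal to $1$ while the competing invariant grows linearly in a parameter $k$.

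For assertion (1) I would reuse the alternating $4$-plats $\K_k=(-k,-2,-1,2,k)$ from the preceding theorem. There it is shown that each $\K_k$ is an alternating knot with $n(\K_k)=1$ and that, in its minimum diagram $(1,k-1,3,1,k)$, Corollary \ref{4plats} (equivalently (\ref{solaeq})) gives $n_d(\K_k)=2k$. Since $\K_k$ is a knot ($\nu=1$), the genus formula (\ref{genuseq}) yields $g(\K_k)=\frac{1}{2}(n_d(\K_k)-1+1)=k$, so $g(\K_k)-n(\K_k)=k-1$, which exceeds $m$ whenever $k>m+1$. Thus (1) falls out immediately once the genus formula is applied to a family we have already analyzed, and I expect no real difficulty here.

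For assertion (2) I would take the $(2,2k)$ torus link $L_k$, oriented so that its two components run anti-parallel through the twist region. First, $n(L_k)=1$: in the standard diagram the $2k$ crossings form a single nontrivial flyping circuit of anti-parallel crossings, so by the property recorded in Section 2 they can all be eliminated by a single smoothing; since $L_k$ is nontrivial, $n(L_k)=1$. The substance of the argument is the lower bound on $u(L_k)$, which is where I expect the only genuine work. I would argue via the linking number: the two components of $L_k$ have linking number $\pm k$, since all $2k$ crossings lie between the two components and carry one common sign. In any unknotting sequence, only inter-component crossing changes alter the linking number, each by $\pm 1$, while the target unlink has linking number $0$; hence at least $k$ crossing changes are required, giving $u(L_k)\ge k$. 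As $k$ changes clearly suffice, $u(L_k)=k$, and therefore $u(L_k)-n(L_k)=k-1>m$ for $k>m+1$.

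The main obstacle, such as it is, lies entirely in assertion (2): one must confirm that the anti-parallel orientation really forces $n(L_k)=1$ and that the linking-number lower bound is sharp. The signature offers an alternative lower bound, since $|\sigma(L_k)|=2k-1$ already forces $u(L_k)\ge k$ through the Murasugi inequality; however, the linking-number computation is cleaner and entirely self-contained, so I would present that argument in the final proof.
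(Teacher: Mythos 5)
Your argument is correct, and the two halves compare differently with the paper. Part (1) is essentially identical to the paper's proof: the paper uses the very same family $(-k,-2,-1,2,k)$, cites $n(\K)=1$ from the preceding theorem, and reads off the genus $k$ from the minimum diagram $(1,k-1,3,1,k)$; your derivation of $g(\K_k)=k$ via (\ref{genuseq}) just makes explicit what the paper asserts.

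Part (2) is where you take a genuinely different route. The paper uses the three-component torus link $T(3n,3)$, oriented with two components parallel and the third reversed; smoothing one crossing between oppositely oriented components yields the unlink, so $n=1$, and the unknotting number $u(T(3n,3))=3n$ is obtained by invoking the Bennequin conjecture (a deep result, proved via gauge theory). You instead use the $(2,2k)$ torus link with anti-parallel orientation: $n(L_k)=1$ follows from the same anti-parallel flyping-circuit property recorded in Section 2 that underlies the paper's example, but your lower bound $u(L_k)\ge k$ comes from the classical linking-number obstruction, which is completely elementary and self-contained, and even gives the exact value $u(L_k)=k$. Both constructions exploit the same phenomenon (a reversed component lets a large torus-link twist region collapse after one smoothing); the difference is that your unknotting bound replaces a deep external theorem with a first-principles argument, which is a real simplification, while the paper's choice of $T(3n,3)$ buys nothing extra for this particular statement.

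One caution on your closing remark: the signature of a link is orientation-dependent, and with the anti-parallel orientation you need for $n(L_k)=1$, Seifert's algorithm gives an annulus, so $|\sigma(L_k)|=1$, not $2k-1$; the value $2k-1$ belongs to the parallel orientation. Since $u$ is orientation-independent you could still run Murasugi's inequality with the parallel orientation, but as written the remark conflates the two orientations --- one more reason the linking-number argument is the right one to present.
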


\medskip
\begin{proof}
Notice that the genus of the knot $\mk$ in Figure
\ref{4platexample} is $k$.  Thus if $k>m+1$, then
$g(\mk)-n(\mk)=k-1>m$. For the second part of the theorem,
consider the torus link $T(3n,3)$ ($n$ is an arbitrary positive
integer) as shown in Figure \ref{toruslink}, in which two of the
three components are oriented in parallel (say clockwise) and the
third component is oriented in the other direction (say
counterclockwise). By nullifying any one of the crossings between
two components with  opposite orientations in Figure
\ref{toruslink} we obtain the unlink. Thus $n(T(3n,3))=1$. On the
other hand, by the Bennequin Conjecture \cite{Ben, Wei}, the
unknotting number of $T(3n,3)$ is $3n$ and the result of the
second part of the theorem follows. \end{proof}

\begin{figure}[h!]
\begin{center}
\includegraphics[scale=.2]{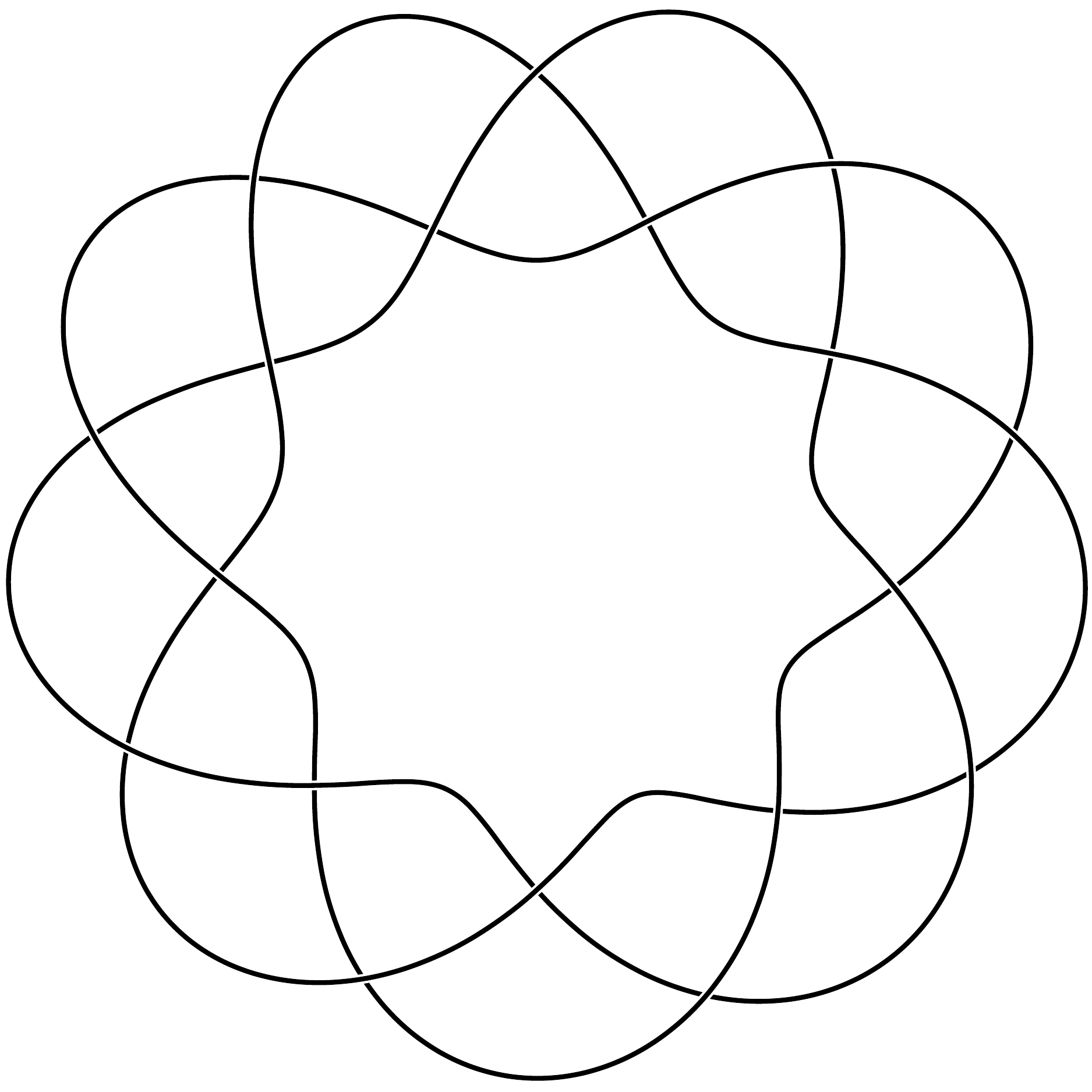}
\caption{\label{toruslink}The torus link $T(3n,3)$ with
nullification number  one and unbounded unknotting number. The
special case $n=3$ is shown here.}
\end{center}
\end{figure}

\medskip
The signature $\sigma(\K)$ of a link $\K$ is defined as the
signature of the matrix  $M+M^T$ where $M$ is the Seifert matrix
obtained from any regular diagram of $\K$. The definition and
computation of a Seifert matrix is beyond the scope of this paper
and we refer the reader to any standard text in knot theory such
as \cite{BZ, Crom}.

\medskip
\begin{theorem}\label{signature}
For any oriented link $\K$ we have $n(\K)\ge |\sigma(\K)|$.
\end{theorem}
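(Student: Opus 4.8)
The plan is to prove the stronger local statement that a single nullification move changes the signature by at most one, and then to telescope along an optimal nullification sequence. Concretely, suppose $\K = \K_0 \to \K_1 \to \cdots \to \K_{n(\K)}$ is a sequence realizing the general nullification number, where each arrow is one smoothing move (with the ambient isotopies, which preserve link type and hence the signature, absorbed into the steps) and where $\K_{n(\K)}$ is a trivial link. A trivial link bounds a disjoint union of disks, so its symmetrized Seifert form is the zero form and $\sigma(\K_{n(\K)}) = 0$. If each step satisfies $|\sigma(\K_{i+1}) - \sigma(\K_i)| \le 1$, then the triangle inequality gives $|\sigma(\K)| = |\sigma(\K_0) - \sigma(\K_{n(\K)})| \le n(\K)$, which is exactly the desired bound. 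Note that because the intermediate isotopies do not change link type, each $\K_{i+1}$ really is obtained from $\K_i$ by a single smoothing performed on some diagram of $\K_i$, so the local statement is all that is needed.

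The heart of the argument is therefore the claim that if $\K'$ is obtained from $\K$ by smoothing one crossing of some diagram $D$ (the smoothing of Figure \ref{smooth} is the oriented resolution, so it respects the orientations used to build a Seifert surface), then $|\sigma(\K) - \sigma(\K')| \le 1$. I would establish this by comparing the surfaces produced by Seifert's algorithm on $D$ and on the smoothed diagram $D'$. The key observation is that $D$ and $D'$ have exactly the same Seifert circles, since the circles are obtained by resolving every crossing and the smoothed crossing is simply already resolved in $D'$. Consequently the Seifert surface $F'$ built from $D'$ is obtained from the surface $F$ built from $D$ by deleting exactly one twisted band; equivalently, $F$ is $F'$ with a single $1$-handle reattached.

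From here the signature bound follows from elementary homological bookkeeping together with eigenvalue interlacing. If the deleted band is a bridge of the Seifert graph, then $H_1(F') \to H_1(F)$ is an isomorphism, the Seifert form is unchanged, and $\sigma(\K) = \sigma(\K')$. Otherwise the band lies on a cycle, so $H_1(F')$ includes into $H_1(F)$ as a subspace of corank one, and since Seifert linking numbers are computed in $S^3$ and do not depend on the ambient surface, the Seifert form of $F'$ is exactly the restriction of that of $F$ to this subspace. Hence the symmetric matrix $M' + (M')^T$ is, up to congruence, the compression of $M + M^T$ to a codimension-one subspace, and Cauchy interlacing forces $|\sigma(M + M^T) - \sigma(M' + (M')^T)| \le 1$.

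The main obstacle I anticipate is the careful justification that the Seifert form genuinely restricts, that is, that the inclusion $H_1(F') \hookrightarrow H_1(F)$ identifies $M' + (M')^T$ with the compression of $M + M^T$ to a codimension-one subspace rather than merely a symmetric matrix of one smaller size. This requires checking that a cycle in $F'$ that bounds in $F$ already bounds in $F'$ (true because the reattached band contributes no $2$-cell), and that the pushoffs and their linking numbers are insensitive to which of the two nested surfaces one uses. A secondary point worth stating explicitly is the invariance of $\sigma$ under the choice of Seifert surface, so that the quantities $\sigma(\K_i)$ appearing in the telescoping are genuinely the link signatures and not artifacts of the particular diagrams chosen at each stage.
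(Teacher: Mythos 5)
Your proposal is correct, and its skeleton is the same as the paper's: prove that a single smoothing changes the signature by at most one, then telescope along an optimal nullification sequence down to the trivial link, whose signature is zero. Where you genuinely diverge is in the key lemma used for the local step. The paper compares the Seifert surfaces $F_0$ and $F_\pm$ built from diagrams differing at one crossing (Figure \ref{skeinrelation}), notes that the symmetrized matrix $A_0=M_0+M_0^T$ sits inside $A_\pm=M_\pm+M_\pm^T$ as a principal submatrix of size one smaller, and then invokes Jones' $\sigma$-series formula (Lemma \ref{sigmaseries}), which expresses the signature jump as $\mathrm{sgn}\bigl(\det(A_\pm)\det(A_0)\bigr)\in\{0,\pm 1\}$. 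You instead realize $M'+(M')^T$ as the compression of $M+M^T$ to a codimension-one subspace of $H_1(F)$ and apply Cauchy eigenvalue interlacing. The two routes come down to the same linear-algebra situation (a symmetric matrix versus a principal submatrix one size smaller), but yours is more robust at the degenerate points: the $\sigma$-series lemma carries a nondegeneracy hypothesis on consecutive submatrices that the paper addresses only loosely when the $A_*$ are singular, whereas interlacing applies uniformly, with zero eigenvalues causing no trouble. Your handling of the disconnected case is also more self-contained: the paper disposes of it via additivity of the signature under connected sum and split union, while you observe directly that deleting a bridge band of the Seifert graph changes neither $H_1$ nor the restricted form. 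The verifications you flag as the main obstacles --- injectivity of $H_1(F')\to H_1(F)$ (the band adds no $2$-cell), agreement of pushoffs (the normal of $F$ restricted to $F'$ is the normal of $F'$), and well-definedness of $\sigma$ independent of the Seifert surface --- are exactly the right ones, and each holds, so the argument is complete.
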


\begin{proof}
We use an approach similar to the one used in the proof of
Theorem 6.8.2 in \cite{Crom}  (where the relationship between the
signature and the unknotting number is investigated). We will
need the following lemma \ref{sigmaseries} from \cite{Jones}.  A
$\sigma$-series of an $n\times n$ matrix of rank $r$ is a
sequence of submatrices $\Delta_i$ such that (i) $\Delta_i$ is an
$i\times i$ matrix; (ii) $\Delta_i$ is obtained from
$\Delta_{i+1}$ by removing a single row and a single column; and
(iii) no two consecutive matrices $\Delta_i$ and $\Delta_{i+1}$
are singular when $i<r$.

\medskip
\begin{lemma}\label{sigmaseries}\cite{Jones}
Let $A$ be a symmetric $n\times n$ matrix with a $\sigma$-series
as described above.  Put $\Delta_i=1$. Then the signature of $A$
is given by
$$
\sigma(A)= \sum^n_{i=1} {\rm sgn}(\det(\delta_{i-1})\det( \delta_i)),
$$
where ${\rm sgn}()$ is the sign function.
\end{lemma}

Let $D_+$, $D_-$ and $D_0$ be three link diagrams that are
identical except at one  crossing as shown in Figure
\ref{skeinrelation} and let $L_+$, $L_-$ and $L_0$ be their
corresponding link types, we claim that
$$
|\sigma(L_\pm)-\sigma(L_0)|\le 1.
$$

\begin{figure}[h!]
\begin{center}
\includegraphics[scale=.5]{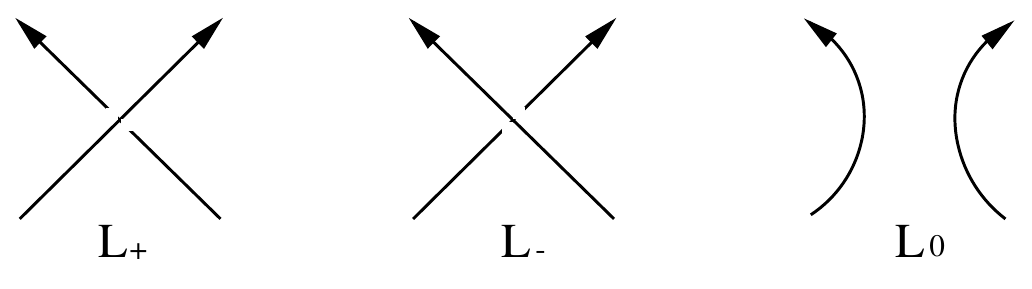}
\caption{\label{skeinrelation} Three diagrams that differ only at one crossing.}
\end{center}
\end{figure}

\medskip
To prove this, let $F_+$, $F_-$ and $F_0$ be the projection
surfaces constructed from  $D_+$, $D_-$ and $D_0$, respectively.
Let $M_+$, $M_-$ and $M_0$ be the Seifert matrices constructed
from these surfaces. If $F_0$ is disconnected then $L_+$ and
$L_-$ are isotopic links equivalent to a connected sum $L_1\#L_2$
for some links $L_1$ and $L_2$. Therefore in this case $\sigma
(L_1\#L_2)=\sigma(L_1)+\sigma(L_2)=\sigma(L_1\sqcup
L_2)=\sigma(L_0)$ and
$\sigma(L_+)-\sigma(L_0)=\sigma(L_-)-\sigma(L_0)=0$.

\medskip
If $F_0$ is connected then $F_+$ and $F_-$ are obtained from
$F_0$  by adding a twisted  rectangle at the crossing that is
switched or eliminated. Therefore the Seifert matrices $M_+$ and
$M_-$ have one additional column and row added to the Seifert
matrix $M_0$ as shown below for $M_+$ where $b$ is an additional
loop that passed through the added rectangle and back through the
rest of the surface $F_0$ and $(a_1,\cdots,a_n)$ is a basis for
$H_1(F_0)$.

 \begin{center}
 \begin{tabular}{c|cccc}
 &$a_1^+$&$\cdots$&  $a_n^+$&$b^+$ \\ \hline
 $a_1$&&&&$v_1$\\
 $\vdots$&&$M_0$&&$\vdots$\\
 $a_n$&&&&$v_1$\\
 $b$&$\lambda_1^+$&$\cdots$&  $\lambda_n^+$&$\beta$
  \end{tabular}
 \end{center}
We write $A_*=M_*+M_*^T$ where $*=+,-$ or $0$. The matrices $A_*$
may not be singular,  however they are non-singular if the
corresponding $L_*$ is a knot. In either case we can use Lemma
\ref{sigmaseries} to obtain that:
$$
\sigma(A_\pm)-\sigma(A_0)= sign(\det(A_\pm)\det(A_0))= \delta,
$$
where $*=+$ or $-$ and from this it follows that $\delta=\pm 1$ or $0$.

\medskip
This result implies that in any nullification sequence of $\K$,
the smoothing of a crossing  can only change the signature by at
most one. Since the signature of the trivial link is zero and
$n(\mk)$ is the minimum number of smoothing moves needed to
change $\K$ to a trivial link, it follows that $|\sigma(\K)|\le
n(\K)$.
\end{proof}

\medskip
\begin{corollary}\label{corunknot}
Let $\mk$ be any knot then $|\sigma(\mk)|\le n(\mk)\le 2 u(\mk)$. In particular if $|\sigma(\mk)| = 2 u(\mk)$, then  $|\sigma(\mk)|=n(\mk)=2 u(\mk)$.
\end{corollary}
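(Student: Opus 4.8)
The plan is to obtain this statement as an immediate consequence of the two results that precede it, so the proof will consist almost entirely of assembling pieces already in hand rather than of any new argument. The first inequality $|\sigma(\mk)|\le n(\mk)$ is nothing more than Theorem \ref{signature} specialized to the case in which the oriented link $\mk$ happens to be a knot; since that theorem is stated for arbitrary oriented links, no extra work is required. The second inequality $n(\mk)\le 2u(\mk)$ is exactly the content of Lemma \ref{unknot}. Chaining the two gives the full sandwich
$$
|\sigma(\mk)|\ \le\ n(\mk)\ \le\ 2u(\mk),
$$
which is the first assertion of the corollary.

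For the ``in particular'' clause I would argue by a simple squeeze. Suppose the two outer quantities coincide, i.e. $|\sigma(\mk)| = 2u(\mk)$. Then the displayed chain reads $|\sigma(\mk)|\le n(\mk)\le |\sigma(\mk)|$, so the middle term $n(\mk)$ is trapped between two equal numbers and must equal both of them. Hence $|\sigma(\mk)| = n(\mk) = 2u(\mk)$, as claimed. This is the standard sandwich/antisymmetry observation and needs no further justification once the chain of inequalities is in place.

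The only point deserving a word of care is that Theorem \ref{signature} is phrased for links, whereas Lemma \ref{unknot} and the notions $u(\mk)$ and $\sigma(\mk)$ in the corollary are phrased for a knot; I would simply remark that restricting Theorem \ref{signature} to knots is legitimate and that $\sigma$ and $u$ are both well defined knot invariants, so all three quantities in the chain make sense for the same object $\mk$. Beyond this bookkeeping there is no genuine obstacle, since both governing bounds have already been proved; the corollary is best presented as a one-paragraph deduction emphasizing that the interest lies not in the derivation but in the rigidity phenomenon it records, namely that whenever the extreme bounds $|\sigma(\mk)|$ and $2u(\mk)$ agree the nullification number $n(\mk)$ is pinned down exactly.
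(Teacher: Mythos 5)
Your proposal is correct and matches the paper's intent exactly: the corollary is stated without proof precisely because it is the immediate combination of Theorem \ref{signature} ($|\sigma(\mk)|\le n(\mk)$) and Lemma \ref{unknot} ($n(\mk)\le 2u(\mk)$), followed by the squeeze argument you describe for the equality case. No gaps; your write-up simply makes explicit the one-line deduction the paper leaves to the reader.
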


\medskip
Note that the above Corollary is quite powerful if one wants to determine the actual general nullification number for small knots in the knot table. In \cite{EMS} the general nullification number of all but two knots of up to 9 crossings is determined.
The following corollary is an immediate consequence of Theorem
\ref{signature} and the fact  that all knots have even signatures
\cite{M}.

\begin{corollary}
If $\K$ is a knot and $n(\K)=1$, then $\sigma(\K)=0$.
\end{corollary}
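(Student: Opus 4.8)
The plan is to combine Theorem \ref{signature} with the parity constraint on knot signatures, so the entire argument reduces to a short numerical deduction. First I would apply Theorem \ref{signature} directly to the hypothesis $n(\K)=1$, which yields the bound $|\sigma(\K)|\le n(\K)=1$. Since the signature is an integer by construction (it is the signature of the symmetric integer matrix $M+M^T$), this immediately forces $\sigma(\K)\in\{-1,0,1\}$.

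Next I would invoke the classical fact, cited from \cite{M}, that the signature of any knot is even. Among the three candidate values $-1$, $0$, and $1$ permitted by the inequality above, the only even one is $0$. The two facts together therefore pin down $\sigma(\K)=0$, which is exactly the assertion of the corollary.

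There is essentially no hard step here: the corollary is a two-line consequence of the already-established inequality $n(\K)\ge|\sigma(\K)|$ and the evenness of knot signatures. The only point requiring a moment's care is that the evenness assertion is special to knots (single-component links) and fails for general links; this is precisely why the hypothesis restricts $\K$ to be a knot rather than an arbitrary link, and it is also why the weaker conclusion $|\sigma(\K)|\le 1$ cannot be sharpened to $\sigma(\K)=0$ in the multi-component setting. No further estimates or diagrammatic arguments are needed beyond what Theorem \ref{signature} already supplies.
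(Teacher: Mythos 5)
Your proof is correct and is exactly the argument the paper gives: the corollary is stated there as an immediate consequence of Theorem \ref{signature} (which gives $|\sigma(\K)|\le n(\K)=1$) together with the evenness of knot signatures cited from \cite{M}. Your additional remark about why the hypothesis must restrict to knots is a nice clarification but does not change the approach.
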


In \cite{Hon} the signature of an oriented alternating link $\mk$
is related to the writhe  via the concept of nullification
writhe.  Let $D$ be a reduced alternating diagram of $\mk$ and in
this diagram we have a nullification sequence of crossings $c_1$,
$c_2$, ... , $c_k$, where $k=n_D=n_d(\mk)$, then the
nullification writhe introduced in \cite{C} is defined as the sum
of the signs of the crossings $c_1$, $c_2$, ... , $c_k$ and is
denoted by $Wr(n_D)$. We have:

\medskip
\begin{theorem}\cite{Hon}\label{nullwrithe}
For an alternating oriented link
$\sigma(\mk)+Wr(n_D)=0$.
\end{theorem}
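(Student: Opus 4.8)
The plan is to prove the identity by induction along a nullification sequence, reducing it to a local computation of how the signature changes under a single smoothing. Fix a reduced alternating diagram $D$ of $\mk$ together with a nullification sequence $c_1, c_2, \ldots, c_k$ with $k = n_D$, so that smoothing $c_1, \ldots, c_k$ in order turns $D$ into a trivial diagram consisting of disjoint Seifert circles, whose link type is the unlink with signature $0$. Writing $\mk = \mk_0, \mk_1, \ldots, \mk_k$ for the sequence of link types obtained after each smoothing (with $\mk_k$ the unlink), the quantity $\sigma(\mk) = \sigma(\mk_0) - \sigma(\mk_k)$ telescopes as $\sum_{i=1}^k (\sigma(\mk_{i-1}) - \sigma(\mk_i))$. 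Thus it suffices to show that smoothing $c_i$ changes the signature by exactly $-\text{sign}(c_i)$, since then $\sigma(\mk) = -\sum_{i=1}^k \text{sign}(c_i) = -Wr(n_D)$.

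For the per-step change I would reuse the Seifert-matrix machinery from the proof of Theorem \ref{signature}. Smoothing the oriented crossing $c_i$ realizes the transition $D_\pm \to D_0$ of Figure \ref{skeinrelation}, with $D_\pm$ selected according to the sign of $c_i$. The analysis there gives $\sigma(\mk_{i-1}) - \sigma(\mk_i) = \text{sign}(\det(A_\pm)\det(A_0)) = \delta \in \{-1, 0, +1\}$ via Lemma \ref{sigmaseries}. The content of the theorem is therefore to promote this bound to the exact value $\delta = -\text{sign}(c_i)$; that is, to show $\delta$ is never $0$ and carries the correct sign.

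The main obstacle, and the only place the alternating hypothesis is essential, is exactly this sharpening. The key is that the intermediate diagrams arising from nullifying an alternating diagram stay controlled: after each smoothing one may perform reduction moves as in Figure \ref{RMonereduction} to return to a reduced alternating diagram, and by Theorem \ref{alterNull} together with formula (\ref{solaeq}) each smoothing drops $Cr(D) - s(D) + 1$ by exactly one. I would exploit the resulting definiteness of the relevant symmetrized Seifert form for these alternating diagrams to force $\det(A_\pm)$ and $\det(A_0)$ to be nonzero with prescribed signs, thereby pinning $\delta$ to $\mp 1$. Concretely, the sign of a crossing in a reduced alternating diagram correlates with whether the associated homology generator contributes positively or negatively to $M + M^T$, and the minimal-genus property of the Seifert surface coming from the alternating diagram should guarantee that no cancellation occurs; this is what rules out $\delta = 0$ and fixes its sign as $-\text{sign}(c_i)$.

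Finally, I would assemble the telescoping sum. Since the well-definedness of $Wr(n_D)$, its independence of the chosen nullification sequence, is already available from \cite{C}, the computed per-step changes give $\sigma(\mk) = -Wr(n_D)$, which is the claimed identity $\sigma(\mk) + Wr(n_D) = 0$. The delicate point to verify carefully is the orientation and sign bookkeeping: one must confirm that the oriented smoothing used in the nullification matches the $D_0$ resolution in the skein triple, so that a positive crossing indeed contributes $-1$ and a negative crossing $+1$ to $\sigma(\mk_{i-1}) - \sigma(\mk_i)$.
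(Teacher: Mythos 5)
First, a point of reference: the paper does not prove this statement at all --- Theorem \ref{nullwrithe} is quoted directly from Hongler's paper \cite{Hon}. So there is no internal proof to compare yours against, and your proposal has to stand on its own merits. Its skeleton is reasonable: telescoping $\sigma$ along a nullification sequence (using that smoothing preserves alternation, that the trivial link has $\sigma=0$, and that crossing signs are unaffected by smoothing other crossings) correctly reduces the identity to the per-step claim that smoothing $c_i$ changes the signature by exactly $-\mathrm{sign}(c_i)$.

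The genuine gap is that this per-step claim \emph{is} the theorem, and the justification you sketch for it would fail. The machinery you import from the proof of Theorem \ref{signature} (Lemma \ref{sigmaseries} applied to $A_*=M_*+M_*^T$) only yields $|\sigma(L_\pm)-\sigma(L_0)|\le 1$; it cannot exclude $\delta=0$ or fix the sign. To sharpen it you invoke ``definiteness of the relevant symmetrized Seifert form'' for alternating diagrams, but no such definiteness holds: if $M+M^T$ were definite for an alternating link, its signature would equal $\pm(2g(\mathcal{K})+\mu-1)$, which is false for the figure-eight knot ($g=1$, $\sigma=0$) and for the paper's own example $10_{22}$ ($g=3$, $\sigma=0$, discussed immediately after the theorem; indeed its minimal nullification sequence contains three positive and three negative crossings, so the per-step contributions must cancel across the sequence --- something a definite form could not allow). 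The definiteness that actually is available for a reduced alternating diagram is that of the checkerboard (Goeritz) form, which enters through the Gordon--Litherland formula and underlies the known proofs (Murasugi, Traczyk, Hongler--Weber); your argument would need that input, or some other honest proof of the per-step signature change, in its third paragraph. A secondary issue you inherit from the paper: applying Lemma \ref{sigmaseries} requires the $\sigma$-series non-singularity hypotheses, which need separate care here because the intermediate links in a nullification sequence are genuinely multi-component links, for which $A_*$ can be singular.
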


\medskip
The example of the 4-plat given in Figures
\ref{nullification4Plat}  and \ref{4platexample}  (with $k=3$)
represents the knot $10_{22}$. This knot has $\sigma(10_{22})=0$
(In fact we can see that a nullification sequence for the diagram
in Figure  \ref{4platexample} has a nullification sequence of
three positive and three negative crossings.) On the other hand
we see that the inequality in Theorem \ref{signature} is strict
since $n(10_{22})=1$. Note that the signature of a 4-plat can be
computed by the following explicit formula.

\begin{theorem}\label{signature4}\cite{Sto3}
Let $\K$ be a 4-plat knot then
$$
\sigma(\K)=\sum_{i=1}^{2g} (-1)^{i-1} sign(a_i),
$$
where the vector $(a_{1},a_{2},...,a_{2g})$ is a continued
fraction expansion of $\K$ using  only even integers. (Such a
continued fraction expansion is called an {\em even continued
fraction expansion}.)
\end{theorem}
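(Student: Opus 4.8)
The plan is to realize the 4-plat knot $\K$ by the standard genus-$g$ Seifert surface read off from its even continued fraction expansion, and then to compute $\sigma(\K)$ as the signature of the associated symmetrized Seifert matrix by means of Lemma \ref{sigmaseries}.

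First I would build the Seifert surface. An even continued fraction expansion $(a_1,a_2,\ldots,a_{2g})$ corresponds to a linear plumbing of $2g$ twisted bands $B_1,\ldots,B_{2g}$, where $B_k$ carries $a_k$ half-twists (an even, hence integral, number of full twists) and $B_k$ is plumbed to $B_{k-1}$ and $B_{k+1}$. Taking the cores of the bands as a basis of the first homology, the Seifert matrix $V$ is bidiagonal, so the symmetrized form $A=V+V^T$ is tridiagonal: its off-diagonal entries are $\pm 1$, coming from the Hopf-type linking of adjacent bands, while a careful sign bookkeeping shows its diagonal entries are $d_k=(-1)^{k-1}a_k$. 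The fact that the expansion has length exactly $2g$ also certifies that this surface is of minimal genus.

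Next I would compute $\sigma(A)$ using the leading principal minors $\Delta_0=1,\Delta_1,\ldots,\Delta_{2g}$ of $A$ as a $\sigma$-series. Because $A$ is tridiagonal with unit off-diagonal entries, these satisfy the three-term recurrence $\Delta_i=d_i\Delta_{i-1}-\Delta_{i-2}$. The decisive point is that evenness forces $\abs{d_i}=\abs{a_i}\ge 2$. An induction then establishes simultaneously that $\abs{\Delta_i}>\abs{\Delta_{i-1}}$ and that $\operatorname{sign}(\Delta_i)=\operatorname{sign}(d_i)\operatorname{sign}(\Delta_{i-1})$: indeed $\abs{d_i\Delta_{i-1}}\ge 2\abs{\Delta_{i-1}}>\abs{\Delta_{i-2}}$, so the subtracted term can neither change the sign of $d_i\Delta_{i-1}$ nor bring the magnitude below $\abs{\Delta_{i-1}}$. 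In particular every $\Delta_i$ is nonzero, so the $\sigma$-series is legitimate (no two consecutive minors are singular) and $A$ is nonsingular, consistent with $\K$ being a knot. Telescoping the sign relation yields $\operatorname{sign}(\Delta_{i-1}\Delta_i)=\operatorname{sign}(d_i)=(-1)^{i-1}\operatorname{sign}(a_i)$, and summing over $i$ via Lemma \ref{sigmaseries} gives $\sigma(\K)=\sum_{i=1}^{2g}(-1)^{i-1}\operatorname{sign}(a_i)$.

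The linear algebra is the easy half; the main obstacle is the first half, namely producing the explicit Seifert surface from the even expansion and pinning down the exact form of $A$, in particular extracting the alternating sign $(-1)^{k-1}$ on the diagonal rather than a uniform sign. This is what makes the formula nontrivial: the naive plumbing would give diagonal $a_k$, which fails to reproduce the signature (for instance it would give $0$ on the trefoil instead of $\pm 2$). The correct sign must be read from the geometry of how consecutive bands are glued and how the orientation of the surface interacts with the continued fraction convention, and I would verify it against a base case such as the trefoil or figure-eight knot before trusting the general computation.
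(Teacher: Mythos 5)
The paper itself offers no proof of this theorem: it is quoted verbatim from Stoimenow \cite{Sto3}, so there is no internal argument to compare yours against. Judged on its own merits, your proposal is the right strategy and its second half is complete: for a symmetric tridiagonal matrix with unit off-diagonal entries the principal minors do satisfy $\Delta_i=d_i\Delta_{i-1}-\Delta_{i-2}$, your induction using $\abs{d_i}\ge 2$ (this is exactly where evenness enters) correctly yields $\abs{\Delta_i}>\abs{\Delta_{i-1}}$ and $\mathrm{sign}(\Delta_{i-1}\Delta_i)=\mathrm{sign}(d_i)$, so the leading minors form a legitimate $\sigma$-series and Lemma \ref{sigmaseries} gives the claimed sum. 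This use of Lemma \ref{sigmaseries} closely parallels how the paper itself deploys that lemma in the proof of Theorem \ref{signature}, so the linear algebra is fully in the spirit of the paper.

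The genuine gap is the one you flag yourself: the assertion that the plumbing surface read off from the even expansion bounds $\K$ and has symmetrized Seifert matrix with diagonal $d_k=(-1)^{k-1}a_k$. That assertion carries all the geometric content, and it is precisely the translation between the paper's all-plus convention $\beta/\alpha=a_n+1/(a_{n-1}+\cdots+1/a_1)$ and the subtractive convention natural to plumbing, in which bands with $c_k$ full twists realize $2c_1-1/(2c_2-\cdots)$; the conversion between the two is what produces the alternating sign. Your test cases do confirm it: the trefoil has even expansion $(-2,2)$, giving $\left(\begin{smallmatrix}-2&1\\1&-2\end{smallmatrix}\right)$ and $\sigma=-2$, while the figure-eight $(2,2)$ gives signature $0$. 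Two further points you should nail down when completing this half: (i) since the paper defines $\sigma(\K)$ via a Seifert matrix coming from a diagram, you need the standard fact (S-equivalence) that any Seifert surface computes the same signature, as your plumbing surface is not a priori of that form; and (ii) the orientation of the chain matters --- because $2g$ is even, reversing the order of $(a_1,\ldots,a_{2g})$ flips the sign of your sum (reversal corresponds to mirroring), so the indexing of the bands $B_k$ against the paper's convention, with $a_1$ innermost, must be fixed correctly or you will prove the formula for $-\sigma(\K)$ instead.
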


\medskip
In fact, the length of the even continued fraction expansion is $2g$ where $g$ is the genus of $\K$ \cite{Crom}.
We also have the following two corollaries:

\medskip
\begin{corollary}\label{nDeqn}
If $D$ is a reduced alternating diagram of an alternating
oriented link $\mk$ such that all  crossings in a nullification
sequence of $D$ have the same sign, i.e., $|Wr(n_D)|=n_D$, then
$n_d(\mk)=n(\mk)$.
\end{corollary}

\begin{proof}
We have $|\sigma(\mk)|=|Wr(n_D)|=n_D=n_d(\mk)\ge n(\mk)\ge
|\sigma(\mk)|$. Thus the equality holds.
\end{proof}

\medskip
\begin{corollary}\label{nullnumberk}
For any even positive integer $k$ the number of knots $\K$ with
$Cr(\mk)$ crossings and  $|n(\K)|\ge k$ grows exponentially with
$\sqrt{Cr(\mk)}$.
\end{corollary}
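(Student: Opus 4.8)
The plan is to prove the lower bound by combining the signature estimate $n(\K)\ge|\sigma(\K)|$ of Theorem \ref{signature} with an explicit family of composite knots whose number, at each fixed crossing number, is controlled by a restricted partition function. The key observation is that nothing from the nullification theory beyond Theorem \ref{signature} is needed: it suffices to exhibit, for each large crossing number $N$, at least $\exp(c\sqrt{N})$ distinct knot types of crossing number exactly $N$ with $|\sigma(\K)|\ge k$, since each of these then automatically satisfies $n(\K)\ge|\sigma(\K)|\ge k$.

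First I would fix a chirality and use the torus knots $T(2,p)$ for odd $p\ge 3$; each is a prime alternating $4$-plat with $Cr(T(2,p))=p$ and $|\sigma(T(2,p))|=p-1$, all signatures of the same sign once the chirality is fixed. For a partition $\lambda=(p_1\ge p_2\ge\cdots\ge p_r)$ of $N$ into odd parts $\ge 3$, I would form the connected sum $K_\lambda=T(2,p_1)\#T(2,p_2)\#\cdots\#T(2,p_r)$. I then record three standard facts about connected sums: signature is additive, so (with no cancellation, because the summands share a chirality) $|\sigma(K_\lambda)|=\sum_i(p_i-1)=N-r$; crossing number is additive for connected sums of alternating knots, so $Cr(K_\lambda)=\sum_i p_i=N$; and prime decomposition is unique, so distinct partitions $\lambda$ give distinct knots $K_\lambda$, the $T(2,p)$ being distinct primes. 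Since every part is $\ge 3$ we have $r\le N/3$, hence $|\sigma(K_\lambda)|=N-r\ge 2N/3$, which exceeds $k$ as soon as $N>3k/2$. Thus Theorem \ref{signature} gives $n(K_\lambda)\ge k$ for every such $\lambda$ once $N$ is large.

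It then remains to count. The number of knots of crossing number $N$ with $n(\K)\ge k$ is at least the number $P(N)$ of partitions of $N$ into odd parts $\ge 3$, and an elementary lower bound (or the Hardy--Ramanujan asymptotics for restricted partitions) gives $P(N)\ge\exp(c\sqrt{N})$ for some constant $c>0$ and all large $N$. Since $N=Cr(\K)$, this is precisely exponential growth in $\sqrt{Cr(\K)}$, as claimed. Note the restriction that $k$ be even is harmless here: $|\sigma|$ is always even for a knot, and in any case our family realizes $|\sigma|=N-r$ directly.

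The main difficulty is not conceptual but lies in the bookkeeping of the connected-sum construction: one must ensure the signatures genuinely add rather than cancel (this forces the fixed-chirality choice of the $T(2,p)$), and one must know that the crossing number of the connected sum equals $N$, which relies on the additivity of crossing number for alternating knots, a nontrivial consequence of the Kauffman--Murasugi--Thistlethwaite theory. After these are in place, the growth statement is just a restricted-partition estimate. I would remark that one could instead try a direct $4$-plat construction, using the signature formula of Theorem \ref{signature4} to choose even continued fractions whose signs align and force large $|\sigma|$; this would even yield growth exponential in $Cr(\K)$ itself, but it makes the distinctness and counting arguments more delicate, so the connected-sum route is the natural one for the stated $\sqrt{Cr(\K)}$ bound.
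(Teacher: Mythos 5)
Your proof is correct, but it takes a genuinely different route from the paper's. Both arguments reduce to Theorem~\ref{signature} (so that $n(\K)\ge|\sigma(\K)|$) together with a restricted-partition estimate of order $\exp(c\sqrt{N})$; the difference is in how the family of knots with $|\sigma(\K)|\ge k$ and crossing number $N$ is produced. The paper stays inside the class of 4-plats: it takes vectors $(a_1,\dots,a_{2g})$ of even entries with strictly alternating signs, so that Theorem~\ref{signature4} forces $|\sigma(\K)|=2g\ge k$, reads off $Cr(\K)=\sum_i|a_i|-2g+1$ from the standard formula for rational knots, gets distinctness of the knot types for free from the classification of 4-plats, and counts such vectors by partitions of $(m-1)/2-g$ into $2g$ nonnegative parts. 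You instead take connected sums of fixed-chirality torus knots $T(2,p)$ and realize large signature by additivity; this is sound, but it imports three external facts the paper's construction never needs: additivity of the signature under connected sum, additivity of the crossing number for connected sums of alternating knots (Kauffman--Murasugi--Thistlethwaite), and Schubert's unique prime decomposition to separate the knot types. In exchange, your route avoids the even-continued-fraction signature formula and the rational-knot classification. Two side remarks: first, the paper's examples are prime (indeed rational) knots, so its construction proves the slightly stronger statement that exponentially many \emph{prime} knots have large nullification number, which composite examples cannot give. Second, your closing remark has the comparison somewhat backwards: the ``direct 4-plat construction'' you describe as more delicate is exactly what the paper carries out, and distinctness there is immediate since distinct even continued fractions give distinct fractions, hence distinct 4-plats up to mirror image; you are right, however, that counting \emph{ordered} sign-alternating even vectors (compositions rather than partitions) in that construction would improve the growth rate to exponential in $Cr(\K)$ itself, a strengthening the paper forgoes by counting unordered partitions.
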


\begin{proof}
Let us assume for simplicity that $m=Cr(\mk)$ is odd and consider
the 4-plats $\mk$ with a  vector form $(a_{1},a_{2},...,a_{2g})$
such that $a_i$ is even and  $a_ia_{i+1}<0$ for all $i$. Then $\sigma(\mk)=2g$. The
crossing number $Cr(\mk)$ is given by \cite{Crom}
$$
Cr(\K)=(\sum_{i=1}^{2g}|a_i|)-2g+1.
$$
If $m=Cr(\K)$ is sufficiently large then we can generate such a
vector by partitioning the positive integer $m+2g-1$ into $2g$
even integers $b_i$ where $b_i\ge 2$. Here we set
$a_i=(-1)^{i+1}\cdot b_i$. This is equivalent to partitioning the
integer $m-2g-1$  into $2g$ even integers $c_i$ where $c_i\ge 0$
which in turn is equivalent to the number of partitions of the
integer $(m-1)/2-g$ into $2g$ integers $d_i$ where $d_i\ge 0$. A
standard result in number theory about the number of partitions
then implies the result.
\end{proof}

\medskip
\section{Nullification Number One Links}

\medskip
In this section we explore the special family of links whose
general nullification number is one.  The first question is about
the number of such links. In the following we will give a partial
answer to this question. Let $(a_{1},a_{2},\ldots,a_{k})$ be the
standard vector form of a rational link $\K$ corresponding to the
rational number $\frac{p}{q}$. Consider the rational link
$\K^\prime$ defined by the vector
$(a_{1},a_{2},\ldots,a_{k},\varepsilon,-a_{k},\ldots,-a_{1})$,
where $\varepsilon=\pm1$. Let $\frac{a}{b}$ be the fraction for
$\K^\prime$, then we can get
$\frac{a}{b}=\frac{p^{2}}{\left(\varepsilon+pq\right)}$ by using
a method from \cite{Sieb}. Furthermore, when $\K$ is a knot,
$\K^\prime$ is also a knot and has general nullification number
1.

\medskip
Two fractions $\frac{p^{2}}{\left(\varepsilon_{1}+pq\right)}$ and
$\frac{p^{2}}{\left(\varepsilon_{2}+pq^\p\right)}$ represent the
same knot iff
$\varepsilon_{1}+pq\equiv\varepsilon_{2}+pq^\p\left({\rm mod}\,
p^{2}\right)$ or
$\left(\varepsilon_{1}+pq\right)\left(\varepsilon_{2}+pq^\p\right)\equiv1\left({\rm mod}\,
p^{2}\right)$. If $\varepsilon_{1}=\varepsilon_{2}$ then this
yields $pq\equiv pq^\p\left({\rm mod}\, p^{2}\right)$ or
$pq\equiv-pq^\p\left({\rm mod}\, p^{2}\right)$. So $q\equiv q^\p\left({\rm mod}\,
p\right)$ or $q\equiv-q^\p\left({\rm mod}\, p\right)$. This implies that
different fractions $\frac{p}{q}$ result  in different 4-plats
$\frac{p^{2}}{\left(\varepsilon_{1}+pq\right)}$ (up to mirror
images).

\medskip
Thus we have shown that for each rational knot $\K$, there exists
a rational knot $\K^\prime$  (unique up to mirror image) with
nullification number one. Furthermore,
$Cr(\K^\prime)=2\sum^k_{i=1} a_i=2Cr(\K)$. Since the number of
rational knots grows exponentially, we have shown the following
theorem.

\medskip
\begin{theorem}\label{nullnumber1}
The number of knots $\K$ with $n(\K)=1$ and $Cr(\K)\le n$
grows exponentially in terms of $n$.
\end{theorem}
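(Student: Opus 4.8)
The plan is to deduce the theorem as a counting corollary of the construction carried out in the discussion immediately preceding it. That construction assigns to each rational knot $\K$, given by a vector $(a_1,\ldots,a_k)$ and fraction $p/q$, the rational knot $\K^\prime$ with vector $(a_1,\ldots,a_k,\varepsilon,-a_k,\ldots,-a_1)$ and fraction $p^2/(\varepsilon+pq)$. I will use the facts already established there: that $\K^\prime$ is again a knot with $n(\K^\prime)=1$, that $Cr(\K^\prime)=2\,Cr(\K)$, and that the assignment $\K\mapsto\K^\prime$ is injective up to mirror images. Together these say that the family of nullification-number-one knots contains an essentially injective image of the family of all rational knots, with crossing numbers merely doubled, so it suffices to know that the latter family is large.

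First I would fix the counting input: the number of distinct rational knot types with crossing number at most $m$ grows exponentially in $m$. This is the classical enumeration of two-bridge knots, so I would cite it and record a lower bound of the form $C\,\gamma^{m}$ for suitable constants $C>0$ and $\gamma>1$. Next, for each rational knot $\K$ with $Cr(\K)\le m$ I form its partner $\K^\prime$; by the crossing-number identity these are knots with $n(\K^\prime)=1$ and $Cr(\K^\prime)\le 2m$. Since $\K\mapsto\K^\prime$ is injective up to mirror image, at most two values of $\K$ can produce the same $\K^\prime$, so the number of distinct knots obtained this way is at least $\tfrac12 C\,\gamma^{m}$.

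Finally I would reparametrize by setting $n=2m$. This exhibits at least $\tfrac12 C\,\gamma^{n/2}=\tfrac12 C\,(\sqrt\gamma)^{\,n}$ knots with general nullification number one and crossing number at most $n$, which is exponential in $n$ because $\sqrt\gamma>1$. The doubling of crossing numbers only passes from base $\gamma$ to base $\sqrt\gamma$, and the mirror-image identification only costs the constant factor $\tfrac12$, so neither step destroys exponential growth; this completes the argument.

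The step requiring the most care is the bookkeeping that preserves the exponential rate through the map $\K\mapsto\K^\prime$. Two points must be checked: that the crossing-number relation is the exact equality $Cr(\K^\prime)=2\,Cr(\K)$, so that the preimage of $\{Cr\le n\}$ genuinely contains every rational knot with $Cr\le n/2$ rather than only some of them; and that the mirror-image collisions cost a bounded multiplicative factor and not an unbounded one. Both are guaranteed by the precise statements recorded above, so once those inputs are granted the remaining work is routine. The genuine mathematical content therefore lies entirely in the construction of $\K^\prime$ and in the known exponential growth of rational knots, both of which are already in hand.
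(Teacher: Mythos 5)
Your proposal is correct and follows essentially the same route as the paper: the paper's proof consists precisely of the construction $\K\mapsto\K^\prime$ with $n(\K^\prime)=1$, the identity $Cr(\K^\prime)=2Cr(\K)$, injectivity up to mirror image, and an appeal to the exponential growth of rational knots. Your only addition is to make the final counting bookkeeping (the passage from base $\gamma$ to base $\sqrt{\gamma}$ and the factor $\tfrac12$ from mirror images) explicit, which the paper leaves implicit.
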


\medskip
We would like to point out that the rational knots considered
above do not contain all  nullification number one rational
knots. We give two additional example of 4-plat knot families with nullification number one.
In the two examples we do not follow the usual vector notation for rational knots such as in \cite{BZ}. Instead we adopt the sign assignment convention as shown in Figure \ref{convention} for the crossings that are in the boxes in Figures  \ref{family1} and  \ref{family2}. Here the crossings have two ends marked as on the bottom and the other two ends as on the top and therefore the crossings in Figure \ref{convention} can not be rotated by $90$ degrees.

\begin{figure}[h!]
\begin{center}
\includegraphics[scale=0.8]{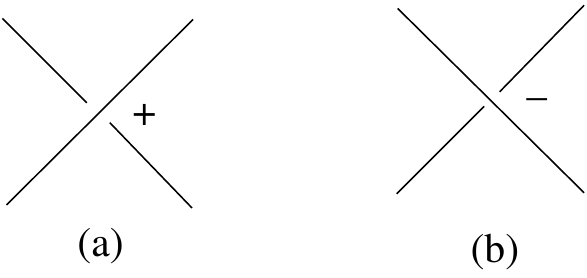}
\caption{\label{convention}Crossing (a) is positive and crossing (b) is negative. Notice that this is not how the sign of a crossing in a rational knot is assigned and is only used for the examples here.}
\end{center}
\end{figure}

\medskip
\begin{example}{\em  Consider the rational knot defined by a vector of the form $(-2a,-2,-2b,2,2a,2b)$, where the first $2a$ means a sequence of $2a$ positive crossings between the first and second strings using the sign convention given in Figure \ref{convention}, and so on, as shown in the first diagram of Figure \ref{family1}.
Note that the actual sign convention of the crossings in the boxes does not matter, as long as boxes with opposite signs have twists that are mirror images of each other.
By a rotation involving the two top boxes in the first diagram, followed by a proper flype involving the resulting two boxes and some other isotopes, one can see that the first diagram is equivalent to the second one. From there it is relatively easy to see the second diagram can be isotoped to the third diagram. If we smooth the crossing as marked in the third diagram, we end up with the fourth diagram. It is not too hard to see that the fourth diagram is the trivial knot. Thus this family of rational knots is also of general nullification number one. The details of the isotopies used are left to the reader as an exercise.

\begin{figure}[h!]
\begin{center}
\includegraphics[scale=1.0]{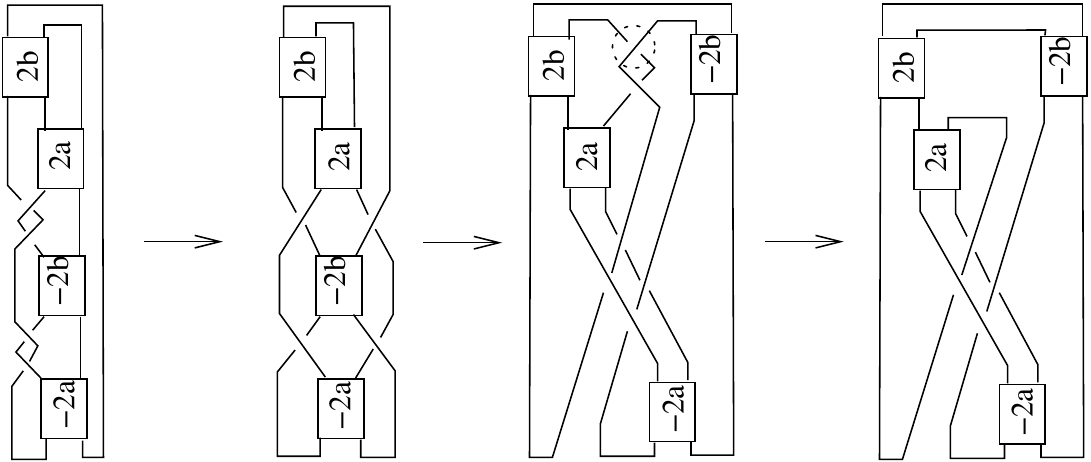}
\caption{\label{family1}The family of rational knots with the vector form $(-2a,-2,-2b,2,2a,2b)$ using the sign convention as defined in Figure \ref{convention}. These knots
have general nullification number one, where the signed number on a box indicates the number of half twists with the shown sign.}
\end{center}
\end{figure}
}
\end{example}

\begin{example}{\em
The second example (of other nullification number one rational knots)
is similarly constructed. Here the knot family consists of rational
knots defined by vectors of the form $(-2b,-2,-2a,2b,2,2a)$ as shown
in the first diagram of Figure \ref{family2}. The isotopes leading to
the single nullification crossing of the knot are illustrated in the rest of the figure.

\begin{figure}[h!]
\begin{center}
\includegraphics[scale=1.0]{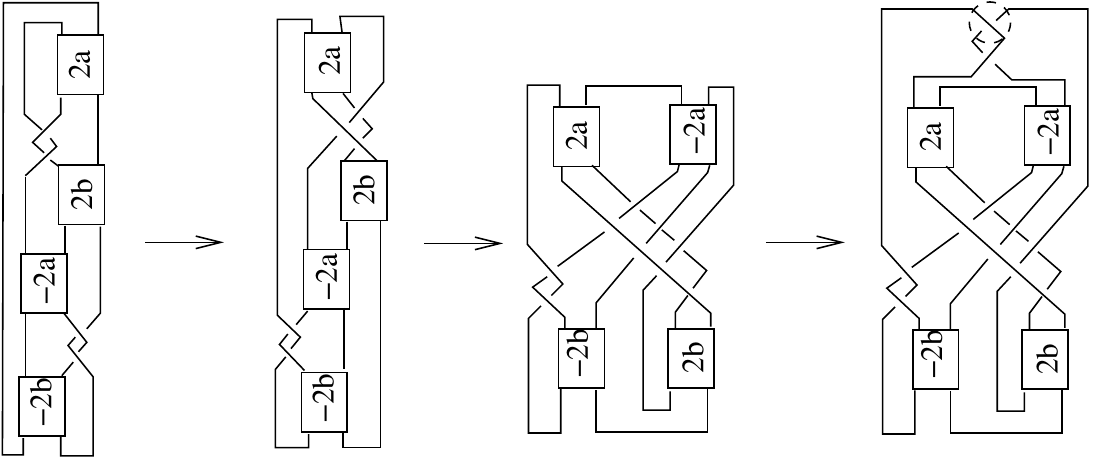}
\caption{\label{family2}Another family of rational knots (defined
by vectors of form $(-2b,-2,-2a,2b,2,2a)$ using the sign convention
 as defined in Figure \ref{convention}) with general nullification number one.}
\end{center}
\end{figure}
}
\end{example}
\medskip
Note that the actual sign convention as shown in in Figure \ref{convention} does not matter as long as the the crossings in the two boxes with the same label have opposite handedness.
These three families of rational knots have one interesting
property in common, namely that they are all {\rm ribbon knots}
(to be defined next). In fact it was conjectured in \cite{Cass}
that these were the only rational ribbon knots. This conjecture
was recently proven in \cite{Lisca}.

\medskip
Let $K$ be a link in $S^3$, and $b:I\times I\rightarrow S^{3}$ be
an imbedding  such that $b\left(I\times I\right)\cap
K=b\left(I\times\partial I\right)$. Let $K_{b}=\{K\setminus
b(I\times\partial I)\}\cup b\left(\partial I\times I\right)$. If
$K_b$ is a link with an orientation compatible with $K$ then $K_b$
is called a \textit{banding} of $K$, or is obtained from $K$ by band
surgery (along $b$). It turns out that nullification and banding
are equivalent operations. See Figure \ref{nullBandEquiv}.

\begin{figure}[h!]
\begin{center}
\includegraphics[angle=270,scale=.3]{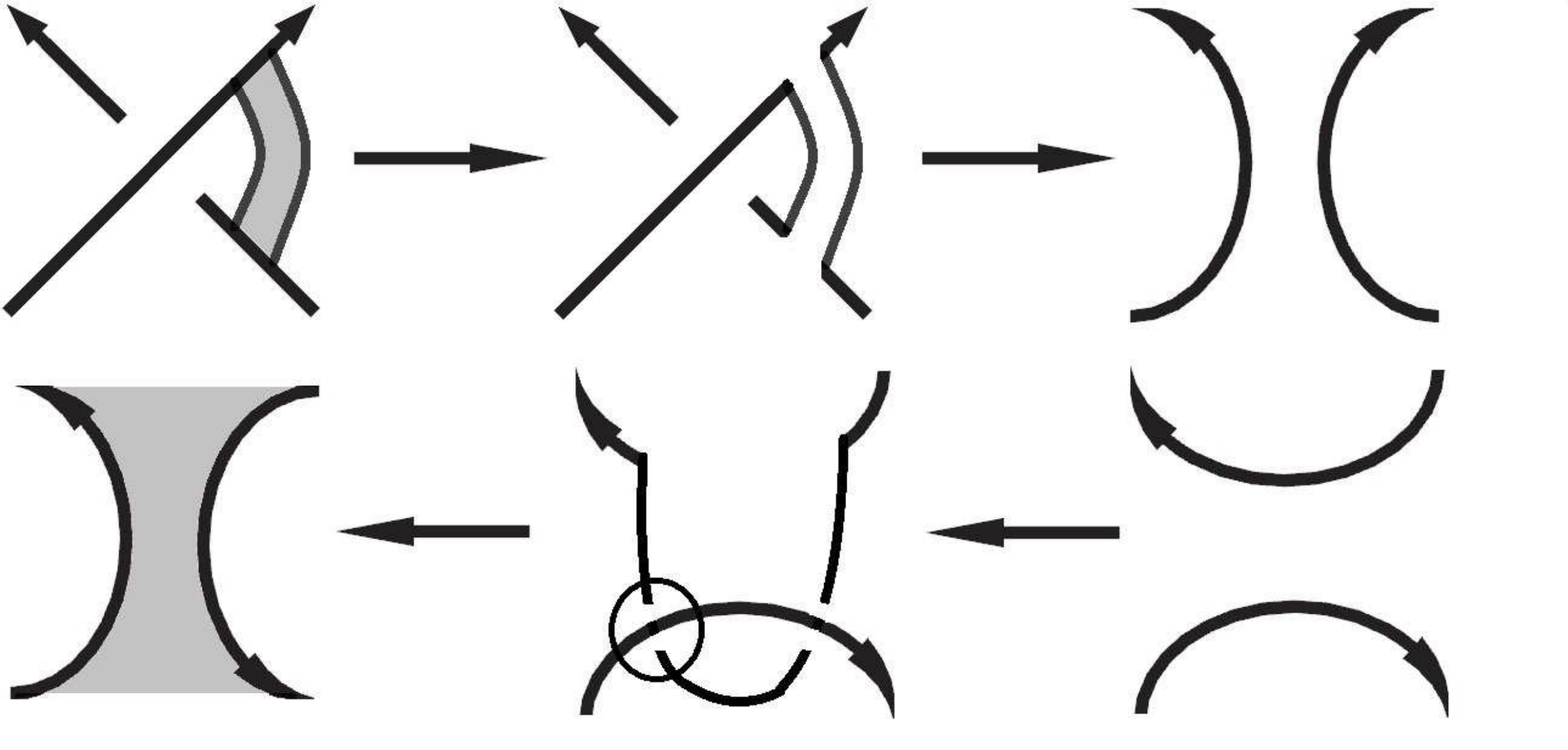}
\caption{\label{nullBandEquiv}Top: Nullification via banding. Bottom: Banding via nullification.}
\end{center}
\end{figure}

\medskip
A knot $\mk$ is a \textit{ribbon knot} if it is a knot obtained
from a trivial $(m+1)$-component link by band surgery along $m$
bands for some $m$. The minimum of such number $m$ is called the
\textit{ribbon-fusion number} of $\mk$ and is denoted by ${\rm
rf}(\mk)$. (This concept was introduced by Kanenobu
\cite{kanBand}.) If a knot can be nullified in one step then it
yields the trivial 2-component link. So a knot has nullification
number one if and only if it is a 1-fusion ribbon knot. Using
this, one can easily find all nullification number one knots.
This can be done by quickly referencing all ribbon presentations
of ribbon knots with 10 or fewer crossings (such as in
\cite{Kawa}). All such diagrams either explicitly have the
necessary crossing, or will have one after applying a
Reidemeister move of type II as shown in Figure \ref{ribbonNull}.
Furthermore, the fact that 1-fusion
ribbon knots have nullification number one, together with
the following theorem due to Tanaka \cite{tanBrid}, implies
that the nullification number and the bridge number of a knot are unrelated in general.

\medskip
\begin{theorem}\cite{tanBrid}
For any pair $(p,q)$ of positive integers with $1 \leq p \leq q$,
there exists a  family of infinitely many composite ribbon knots
{$K_i$} such that $2(p+q)+1 \leq b(K_i) \leq 2(p+q+1)$ and
$rf(K_i)=p$, where $b(K_i)$ denotes the bridge number of the knot $K_i$.
\end{theorem}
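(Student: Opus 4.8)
The plan is to exhibit, for each fixed pair $(p,q)$ with $1\le p\le q$, an explicit infinite family of composite ribbon knots and then to verify the two required properties separately: the bridge number bound via Schubert's additivity theorem, and the ribbon-fusion number via a matching pair of upper and lower estimates. The governing principle is that the connected sum of a knot with its mirror image is a symmetric union, and hence a $1$-fusion ribbon knot, so that such pieces serve as the basic ribbon building blocks while their internal complexity remains free to inflate the bridge number.

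Concretely, I would first fix positive integers $m_1,\dots,m_p$ with $\sum_{k=1}^{p} m_k=p+q$ (possible since $q\ge 0$), and for each $k$ let $J_k$ be a connected sum of $m_k$ distinct prime $2$-bridge knots. Set $K=\#_{k=1}^{p}\bigl(J_k\#\overline{J_k}\bigr)$, where $\overline{J_k}$ denotes the mirror image of $J_k$. Each factor $J_k\#\overline{J_k}$ is a symmetric union, hence a ribbon knot with $rf\bigl(J_k\#\overline{J_k}\bigr)=1$, so $K$ is a composite ribbon knot. Since a $2$-bridge knot has bridge number $2$ and $K$ is a connected sum of the $2(p+q)$ prime $2$-bridge factors, Schubert's formula $b(K_1\#K_2)=b(K_1)+b(K_2)-1$ gives $b(K)=2\cdot 2(p+q)-\bigl(2(p+q)-1\bigr)=2(p+q)+1$, which lies in the asserted range; if the extra unit of slack up to $2(p+q+1)$ is wanted, one replaces a single $2$-bridge symmetric pair by a $3$-bridge one. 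Finally, because there are infinitely many prime $2$-bridge knots, varying the chosen factors yields infinitely many pairwise distinct $K$ realizing the same $(p,q)$.

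It then remains to pin down the ribbon-fusion number. The upper bound $rf(K)\le p$ is immediate, since ribbon-fusion number is subadditive under connected sum and each of the $p$ symmetric pieces contributes a single band. The hard part, and the step I expect to be the main obstacle, is the matching lower bound $rf(K)\ge p$: subadditivity controls $rf$ only from above, so a genuine fusion-number obstruction is needed to rule out realizing $K$ with fewer than $p$ bands. The plan here is to choose the building blocks so that a suitable additive lower bound for the fusion number reads off exactly the value $p$; natural candidates are a Nakanishi-type count of generators of the Alexander module arranged to grow in step with the number of symmetric pieces, or a modern fusion-number bound coming from Heegaard Floer or instanton invariants. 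The delicate point is that the obstruction must detect precisely the $p$ bands and \emph{not} the full count $2(p+q)$ of prime factors, which would overshoot the upper bound; hence the determinants and Alexander polynomials of the $2$-bridge factors must be selected with care so that the chosen invariant stabilizes at $p$ while the knot type still ranges over an infinite family.
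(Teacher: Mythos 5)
First, a point of reference: the paper you are working from does not prove this theorem at all --- it is quoted directly from Tanaka \cite{tanBrid} --- so your attempt must stand as a self-contained proof, and it does not. The gap you flag yourself, the lower bound ${\rm rf}(K)\ge p$, is not a deferred technicality but the entire content of the statement: producing composite ribbon knots with ${\rm rf}\le p$ and prescribed bridge number is the easy half. Worse, the obstruction you name provably cannot work for the knots you built. The relevant Nakanishi-type bound says that an $m$-fusion ribbon knot has rational Alexander module generated by at most $2m$ elements: the module surjects onto the module of the ribbon disk exterior, which is $m$-generated, and the kernel is dual to that quotient in the sense of Casson--Gordon \cite{Cass}. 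Now your $K$ is a connected sum of $2(p+q)$ two-bridge knots, each with cyclic Alexander module, and since you chose \emph{distinct} prime $2$-bridge factors, their Alexander polynomials are (generically, and nothing in your construction prevents this) pairwise coprime, each polynomial appearing exactly twice, once for $J_k$ and once for $\overline{J_k}$. By the Chinese remainder theorem the whole module collapses to a direct sum of two cyclic modules; the generator count then yields only ${\rm rf}(K)\ge 1$, no matter how large $p$ is. Any invariant that could close your gap must be additive across the $p$ pieces yet blind to the $q$-fold inflation inside each piece, and nothing in your sketch exhibits such an invariant.

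Second, there is a step that is false as stated: ``$J_k\#\overline{J_k}$ is a symmetric union, hence ${\rm rf}(J_k\#\overline{J_k})=1$.'' Symmetric unions are ribbon, but fusion number one is far more restrictive. Take $J$ to be the granny knot $T\#T$: then $J\#\overline{J}$ is a connected sum of two square knots, whose rational Alexander module is $\bigl(\mathbb{Q}[t,t^{-1}]/(t^2-t+1)\bigr)^{\oplus 4}$ and needs four generators, so by the bound above its fusion number is exactly $2$, not $1$. The construction that genuinely gives ${\rm rf}(F\#\overline{F})=1$ is the ribbon disk $\alpha\times I$, where $\alpha$ is an arc of knot type $F$ with a single interior maximum --- and such an arc exists precisely when $F$ is at most $2$-bridge, hence prime. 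For your composite $J_k$ the only justified conclusion is ${\rm rf}(K)\le p+q$, obtained by regrouping $K$ into the $p+q$ prime pairs $F\#\overline{F}$. This tension is structural, not accidental: to make a generator count reach $p$ you must repeat Alexander polynomial factors across the $p$ pieces (say, every piece equal to $F\#\overline{F}$ for one fixed $2$-bridge knot $F$), but then Schubert's formula gives $b(K)=2p+1$ and the parameter $q$ disappears from the bridge number entirely. What the theorem actually requires --- and what your construction never produces --- are fusion-number-one ribbon knots of arbitrarily large bridge number whose Alexander modules still stack up additively over the $p$ connected summands. Constructing those is the whole difficulty, and it is exactly what is missing here.
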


In particular if we let $(p,q)=(1,q)$ then we get an infinite
family of 1-fusion ribbon knots  (nullification number one knots)
with arbitrarily large bridge number.

\begin{figure}[h!]
\begin{center}
\includegraphics[scale=0.5]{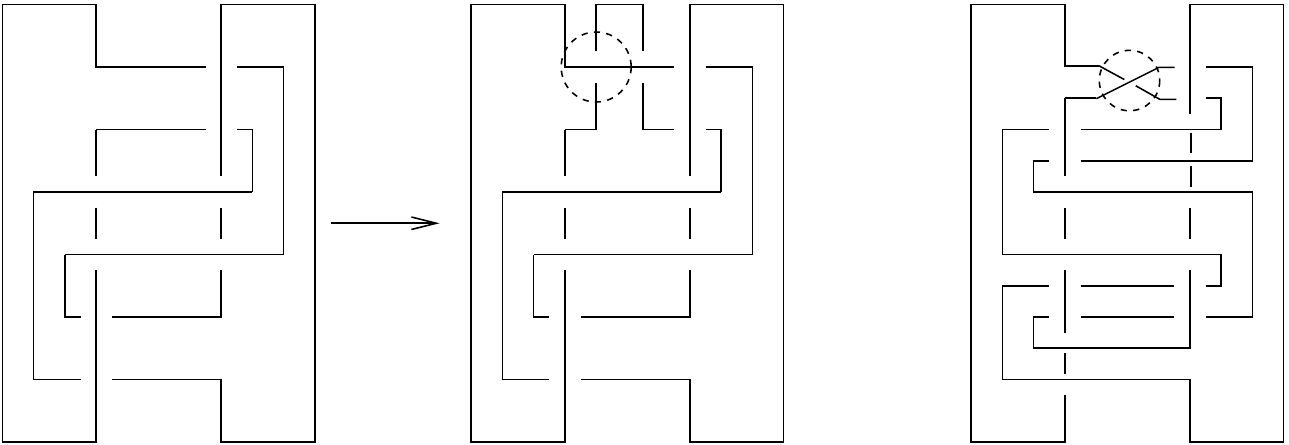}
\caption{\label{ribbonNull}Two examples of nullifying ribbon presentations:
on the left is the knot $6_1$ where the presentation requires a type II
Reidemeister move first, on the right is the knot $8_9$ where no
Reidemeister moves are needed.}
\end{center}
\end{figure}

\medskip
For higher fusion numbers it is not \textit{a priori} clear
whether nullification number is equal to the fusion number.
Fusion number $m$ relies on the separation of a ribbon knot into
$(m+1)$ trivial components, but nullification does not have such
a restriction. So for a given ribbon knot $\K$, it is only
obvious that $n(\K) \leq {\rm rf}(\mk)$.

\medskip
Let us end our paper with a few open questions.

\medskip
1. For a knot or link $L$, how big the difference between $n_r(L)$ and $n_d(L)$ can be? Can we find a class of knots/links such that $n_d(L)-n_r(L)$ is unbounded over all $L$ from this family?

\medskip
2. If $D^\p$ is a diagram obtained from an alternating (reduced) diagram $D$ by one crossing change (so $D^\p$ is no longer alternating and it may even be the trivial knot/link), how much smaller is $n_{D^\prime}$ compared to $n_D$? For alternating knots with unknotting number one, $n_{D^\prime}$ is simply zero hence this difference can be as large as one wants. However, is there a way to relate this problem with the unknotting numbers in general?

\medskip
3. By nullifying one crossing in a diagram $D$ (not necessarily minimum) of a knot/link $\mk$, we obtain a new knot/link. How many different knots/links can be obtained this way? In particular, is this number bounded above by $Cr(\mk)$?

\end{document}